\newtheorem{thm}{Theorem}[section]
\newtheorem{cor}[thm]{Corollary}
\newtheorem{prop}[thm]{Proposition}
\newtheorem{lem}[thm]{Lemma}
\newtheorem{Def}[thm]{Definition}
\newtheorem{rem}[thm]{Remark}
\newcommand{\be}{\begin{equation}}
\newcommand{\ee}{\end{equation}}
\newcommand{\bee}{\begin{equation*}}
\newcommand{\eee}{\end{equation*}}
\newcommand{\ben}{\begin{enumerate}}
\newcommand{\een}{\end{enumerate}}
\newcommand{\pa}{{\partial}}
\title{On  almost rational Finsler metrics}
\author{Ebtsam H. Taha and Bankteshwar Tiwari}
\address{Harish-Chandra Research Institute, Chhatnag Road, Jhunsi, Allahabad 211019, India}
\address{Department of Mathematics, Faculty of Science, Cairo University, Giza 12613, Egypt}
\email{ebtsam.taha@sci.cu.edu.eg, ebtsamtaha@hri.res.in}
\address{DST-CIMS, Institute of Science, Banaras Hindu University, Varanasi 221005, India}
\email{banktesht@gmail.com}
\begin{document}
\begin{abstract} 
We study a special class of Finsler metrics which we refer to as Almost Rational Finsler metrics (shortly, AR-Finsler metrics). We give necessary and sufficient conditions for an AR-Finsler manifold $(M,F)$ to be Riemannian. The rationality of the associated geometric objects such as Cartan torsion, geodesic spray, Landsberg curvature, $S$-curvature, etc is investigated. We prove for a particular subset of AR-Finsler metrics that if $F$ has isotropic $S$-curvature, then its $S$-curvature identically vanishes. Further, if $F$  has isotropic mean Landsberg curvature, then it is weakly Landsberg. Also, if $F$ is an Einstein metric, then it is Ricci-flat. Moreover, we show that Randers metric can not be AR-Finsler metric. Finally, we provide some examples of AR-Finsler metrics and introduce a new Finsler metric which is called an extended $m$-th root metric. We show under what conditions an extended $m$-th root metric is AR-Finsler metric and study its generalized Kropina change.
\end{abstract}
\maketitle

\noindent
 {\bf Keywords:} Sprays; Finsler geometry; $m$-th root metrics, $(\alpha ,\beta)$-metrics; Einstein metrics; Generalized Kropina change.\\ 
 
\noindent 
{\bf MSC 2010:} 53B40, 53C60.
\section{Introduction}
Finsler geometry is a natural generalization of Riemannian geometry. It is wider
in scope and richer in content than Riemannian geometry. A Riemannain metric is quadratic in the fiber coordinates $y$ while a Finsler metric is not necessary  be quadratic in $y$ cf. \cite{Sh}. In the literature of Finsler geometry there are some Finsler structures for which the components of its metric tensor are rational functions in $y$, for example,  Kropina metrics. The rationality in the fiber coordinates $y$ of some geometric objects on Finsler manifold plays a vital role in its characterization. For example, Randers metrics with quadratic Riemann curvature have been investigated in \cite{BLZS}. Finsler spaces with rational spray coefficients have been studied in \cite{spray}. Further, in \cite{$m$-th root}, it was proved that Einstein $m$-th root metrics are Ricci flat using the rationality of the Riemann curvature of an $m$-th root metric. \\

 In this work, we study a class of Finsler metrics, for which the components of its metric tensor $g_{ij}(x,y)$ can be written as a product of a fixed function $\eta(x,y)$ and rational functions $a_{ij}(x,y)$ on the tangent bundle $TM$ (see Definition \ref{Finsler def.}), and call it as Almost Rational Finsler metric, in short AR-Finsler metric. The rationality of the metric tensor forces several geometric objects (like spray coefficients, Riemann curvature, Ricci curvature, Berwald curvature,  etc.)  to have rational functions in $y$. We explicitly calculate some geometric objects of an AR-Finsler metric, viz., Cartan torsion, Cartan mean torsion, spray coefficients, Barthel connection and $S$-curvature. Further, we investigate the conditions for some Finslerian geometric quantities to be rational functions in $y$. We also observe that some geometric objects of an AR-Finsler manifold are always rational and independent of the rationality of the metric tensor.\\
 
 It is interesting to note that there are some famous Finsler metrics  which are  examples of  AR-Finsler metrics. Namely, the generalized Kropina metric, the $m$-th root metric cf.~\cite{$m$-th root}, the Kropina change of the $m$-th root metric and the generalized Kropina change of the $m$-th root metric  \cite{gen. Krop. change, Krop. change}. Besides, we introduce a new Finsler metric looks like the $m$-th root metric but its coefficients are functions on $TM$ rather than functions on $M$, as in the case of $m$-th root metric, which we refer to as \textit{extended $m$-th root metric}. It turns to be an AR-Finsler metric, when its coefficients are rational functions in $y$. We show that its Kropina change as well as generalized Kropina change turn also to be AR-Finsler metric. We further prove that there is no AR-Finsler metric of Randers type. The polynomial $(\alpha, \beta)$-metric has been studied by Z. Shen in \cite{Sh}. In this paper,  we study a special polynomial $(\alpha, \beta)$-metric and find the condition to be an AR-Finsler metric. \\

Some AR-Finsler metrics $F$ are rational in $y$ while others not. We prove the following two  results for irrational $F$ that i) if $(M,F)$ is an AR-Finsler space of isotropic $S$-curvature, then its $S$-curvature identically vanishes; ii) an AR-Finsler space which has isotropic mean Landsberg curvature reduces to weakly Landsberg space. It is an extension of  \cite[Theorem 1.4]{Krop. change}. Further, we show for certain $\eta$ that if $(M,F)$ is an AR-Finsler space of Einstein type, then it has vanishing Ricci curvature. This result represents a generalization of \cite[Theorem 1.1]{$m$-th root} and \cite[Theorem 1.1]{spray}.\\

 In what follows, we give the structure of this paper. Section 2 deals with basic preliminaries required for the rest of this work. In section 3, we define Almost Rational Finsler metric and study some of its characterizations and associated geometric quantities. We give necessary and sufficient for an AR-Finsler manifold to be Riemannian.   We extend the results \cite[Theorem 1.4]{Krop. change},  \cite[Theorem 1.1]{$m$-th root} and \cite[Theorem 1.1]{spray}. Further, we show that no nontrivial Randers metrics is AR-Finsler. In section 4, we give some examples AR-Finsler metrics such as Kropina metric, generalized Kropina metric and $m$-th root metric. Also, we introduce the so called \textit{extended $m$-th root metric} and figure out its generalized Kropina change.

\section{Preliminaries}
Let $M$ be a smooth $n$-dimensional manifold and $TM$ be the corresponding tangent bundle. Let $(x^i)$ be the coordinates of any  point of the base manifold $M$  and $(y^i)$ be a
supporting element at the same point. The  partial differentiation with respect to $x^i$ is denoted by $\partial_i$, while the partial differentiation with respect to  $y^i$ (basis vector fields of the vertical bundle) is denoted by $\dot{\partial}_i$. Let us recall some basics of Finsler geometry. Most of the material presented here with further details  may be found in \cite{Sh}. Hereafter, the Einstein summation convention is in place.

\begin{Def}\label{Finsler def.} A  Finsler structure $F$ on a smooth manifold $M$  is a   mapping
\[F:TM\rightarrow [0, \infty )\]
with the following properties:
\begin{itemize}
    \item[(a)] $F$ is $C^\infty$ on the slit tangent
    bundle  $TM\backslash\{0\}$.

    \item[(b)] $F(x,y)$ is positively homogeneous of first degree in $y$: $F(x,\lambda y) = \lambda F(x,y)$
     for all $y \in TM$ and $\lambda > 0$.

    \item[(c)] The Hessian matrix $g_{ij}(x,y):=\frac{1}{2}\dot{\partial}_i\dot{\partial}_j F^2$
is positive-definite at each point of $TM\backslash\{0\}$.
\end{itemize}
The bilinear symmetric form $g=g_{ij}(x,y)\,dx^i\otimes dx^j$ is called the
Finsler metric tensor or fundamental tensor of the Finsler manifold $(M,F)$.\end{Def} 

Often, a function $F$ satisfies the above mentioned conditions is called a regular Finsler metric. Instead of positive definite, if the metric tensor $g$ is non-degenerate at each point of $TM\backslash\{0\}$, the pair $(M,F)$ is called a pseudo-Finsler manifold.  If $F$ satisfies the conditions (a)-(c)  on an open conic subset $U$ of $TM$ (for every $v \in U$ and $\mu > 0,\, \mu v\in  U $), then $F$ is called  conic Finsler metric. 

\begin{Def}
 The Cartan  torsion associated with a Finsler metric $F$ is given by $C= C_{ijk}\,  dx^i \otimes dx^j \otimes dx^{k}$, where 
$C_{ijk}=\frac{1}{2}\,\dot{\partial}_kg_{ij}=\frac{1}{4}\dot{\partial}_i\dot{\partial}_j\dot{\partial}_k F^2$. The mean Cartan torsion is denoted by $I= I_{k}\, dx^{k},$ where $I_{k}= g^{ij}\,C_{ijk}$.
\end{Def}
 An immediate consequence of the definition of the Cartan tensor is that a Finsler manifold is Riemannian if and only if $C_{ijk}=0$. Deicke proved, in \cite{Deicke}, that a regular Finsler metric is Riemannian if and only if $I_{k}$ vanishes. \\
 
  In 1941, Randers metrics were first studied by the physicist G. Randers,  from the standpoint of general relativity \cite{Randers}. Further, in 1957, R. S. Ingarden applied Randers metrics to the theory of the electron microscope and named them Randers metrics.  A Finsler manifold $(M,F)$ is of Randers type if $F=\alpha + \beta,$ where $\alpha= \sqrt{\alpha_{ij}(x) \,y^{i}\, y^{j}}$  is a Riemannian metric and $\beta = b_{i}(x) \,dx^{i}$ is a $1$-form on $M$ with $|| \beta ||_{\alpha} = \sqrt{\alpha^{ij}(x)\,b_{i}(x)\,b_{j}(x)}< ~1$. The metric $F$ is then said to be a Randers metric. As a generalization of Randers metric, M. Matsumoto introduced $(\alpha, \beta)$-metrics in \cite{Matsumoto}. 
\begin{Def}
 A Finsler metric $F=\alpha\, \phi \left(\frac{\beta}{\alpha}\right)=\alpha\, \phi(s)$  is called $(\alpha, \beta)$-metric if $\phi$ is smooth positive function defined on the interval $(-b_{o}, b_{o})$ such that \[\phi(s)- \phi(s)\, \phi'(s) +(b^2 -s^2 ) \,\phi''(s) > 0,\,\,\,\,\, |s| \leq \, ||\beta||_{\alpha} < b_{o}. \]
\end{Def}

It is known that, the metric tensor associated to $F$ is given by {\Small{
\be \label{gen metric}
g_{ij}= \left(\phi^2 -s \phi\, \phi' \right)\, \alpha_{ij} + \left( \phi \, \phi'' + (\phi')^{2}\right)\, b_{i}\, b_{j}+ \frac{1}{\alpha}\left(\phi\, \phi' -s\left[\phi\,\phi'' + (\phi')^{2} \right] \right) \, \left(b_{i} y_{j} + b_{j} y_{i} -  \frac{s}{\alpha} y_{i} y_{j}\right),
\ee}}
where $y_{j}:= \alpha_{ij}\, y^{i}$.
\begin{Def}
On a Finsler manifold $(M,F)$, the geodesics  are characterized by  
\bee \label{spray coff} \frac{d^2x^i}{dt^2}+ G^i(x,\frac{dx}{dt})=0,
\eee 
where $G^i$ are called spray coefficients of $F$ which defined by 
\be \label{Fspray def}
G^i= \frac{1}{4} \, g^{ir}\left( y^k \, \dot{\pa}_r \pa_k F^2 - \pa_r   F^2 \right) = \frac{1}{2} \, g^{ir}\left( y^k y^s \pa_k g_{rs} -2 \,y^l y^s \pa_{r} g_{ls} \right).
\ee 
\end{Def}
It is worth noting that for a Riemannian metric $F$, the spray coefficients $G^{i}(x,y)$ are quadratic in $y$, whereas for Finsler metric they are highly nonlinear cf.~\cite{oana,Sh}. The geodesic spray induces a nonlinear connection  $ N^i_j:=\dot{\partial}_jG^i$ which is called the Barthel (or Cartan nonlinear) connection associated with $(M,F)$. Thereby, we have the direct sum decomposition 
\[T_{u}(TM)= H_{u}(TM)\oplus V_{u}(TM), \quad\forall u \in TM.\]
The basis vector fields of the horizontal bundle are denoted by $\delta_i:=\partial_i- N^r_i \,\dot{\partial}_r$. 

In addition, $G^i_{jh}:=\dot{\partial}_hN^i_j=\dot{\partial}_h\dot{\partial}_jG^i$ are the coefficients of Berwald connection. Consequently, $G^i_{jhk}:=\dot{\pa}_k G^i_{jh} =\dot{\partial}_k\dot{\partial}_h\dot{\partial}_jG^i$ are the coefficients of Berwald curvature. It is known that, a Finsler manifold with   $G^i_{jhk}=0$ is said to be Berwaldian.  

\begin{Def} 
A Finsler manifold $(M,F)$ is called Landsbergian if Landsberg curvature $L= L_{ijk}\,  dx^i \otimes dx^j \otimes dx^{k}$, where  $L_{ijk}=\dfrac{1}{2}\,y^{m}g_{ms}\,G^s_{ijk}$, vanishes. $(M,F)$ is said to be weakly Landsberg manifold if the mean Landsberg curvature $J= J_{k} \,dx^{k}$ vanishes, where
 \be \label{mean Lands. def.}
 J_{k}= g^{ij}\,L_{ijk}= y^s\, I_{k|s}= y^s\, \pa_s I_k -2 \,G^s\, \dot{\pa}_{s} I_k - N^{s}_{k}\, I_s.
 \ee
 A Finsler manifold $(M,F)$ has isotropic mean Landsberg curvature (or relatively isotropic $J$-curvature) if $J$ can be written in the form 
\be \label{iso mean Lands. def.}
J(x,y)= A(x)\, F(x,y)\, I(x,y), \text{\,\,\,where\,\,\,}  A \in C^{\infty}(M).
\ee
\end{Def}

\begin{Def}
A Finsler metric is called Douglas metric if the Douglas curvature $D = D^i_{jkl}\,\partial_i \otimes dx^j \otimes dx^k \otimes dx^{l}$ vanishes, where
\begin{equation}\label{Douglas}
D^i _{jkl} = G^i _{jkl} - \frac{1}{n+1}\, \dot{\pa}_{j} \dot{\pa}_{k} \dot{\pa}_{l} \left( y^i \,N^s_s  \right).
\end{equation}
\end{Def}

\begin{Def}
The Riemann curvature $R= R^i_k \,{\partial_i}\otimes dx^k$ of a Finsler space is defined by
\be \label{R curv def}
R^i_k=2\,\partial_k G^i - y^{j}\,\partial_j\,\dot{\partial}_k G^i +2 G^j\,\dot{\partial}_k \,\dot{\partial}_j G^i - \dot{\partial}_j G^i\, \dot{\partial}_k G^j.
\ee
The Weyl projective curvature  $W= W^{i}_{j}\, \pa_{i} \otimes dx^j$ is given by
\be \label{Wely cur}
W^{i}_{j} = Q^{i}_{j} -\frac{1}{n+1}\, y^i \dot{\pa}_{s} Q^{i}_{s},
\ee 
where $Q^{i}_{j}= R^{i}_{j} -\dfrac{Ric}{n+1}\, \delta^{i}_{j}  \, \text{ and } \, Ric=R^m_m .$ Further, the $\chi$-curvature is defined by \be \label{chi cur}\chi_l= -\frac{1}{6}\, \Big \{ 2\, \dot{\pa}_{i}R^i_{l} + \dot{\pa}_{l} Ric \Big \}.
\ee

\end{Def}
\begin{Def}
Given a volume form $dV:= \sigma(x)\, dx^1 \wedge...\wedge dx^n$ on $(M,F)$, for any $x\in M$ and $y\in T_xM\setminus\left\lbrace 0 \right\rbrace $,  the distortion $\tau $ of the Finsler metric $F$ is defined by $\tau (x,y)=\ln \left(\dfrac{\sqrt{\det( g_{ij}(x,y))}}{\sigma(x)}\right)$. The derivative of   $\tau$ along  a geodesic  $\gamma$ with $\gamma(0)=x$ and $\gamma'(0)=y$ is  called the $S$-curvature at $x$ along $y$. More explicitly, the $S$-curvature is defined by
\begin{equation*}
S(x,y)=\frac{d}{dt}\left[ \tau(\gamma(t),\gamma'(t))\right] _{t=0}.
\end{equation*}
\end{Def}
 For a local coordinate system in $(M,F)$, the $S$-curvature has the following expression
\begin{equation}\label{eqn8}
S(x,y)=N^m_m (x, y)-y^m\, \partial_{m}\log\left( \sigma(x)\right).
\end{equation}
 Another non-Riemannian object, manifested from the  differentiation of $S$-curvature, is called \textit{Berwald mean curvature} (or \textit{$E$-curvature}) $E= E_{ij} \,dx^i \otimes dx^j ,$  where 
 \be \label{E-cur}
 E_{ij}=\frac{1}{2} \dot{\pa}_{i}\dot{\pa}_{j}S=\frac{1}{2}\dot{\pa}_{i}\dot{\pa}_{j}N^{m}_{m} .\ee

\section{Almost rational Finsler metrics}
\begin{Def}
A Finsler metric $F$ on a manifold $M$ is called  Almost Rational Finsler (simply, AR-Finsler) metric if the coefficients $g_{ij}(x,y)$ of its metric tensor can be expressed as follows:
\be \label{gdef}
g_{ij}(x,y)= \eta(x,y)\, a_{ij}(x,y),
\ee
where
\begin{itemize}
\item[(a)] $\eta: TM \longrightarrow [0,\infty )$ is a smooth function,
\item[(b)] the matrix $( a_{ij}(x,y))_{1 \leq i,j \leq n}$ is  symmetric positive definite  with each $ a_{ij}(x,y)$ be a rational function in the fiber coordinate $y$,
\item[(c)] $\{\eta(x,y)\, a_{ij}(x,y)\}$ is positive homogeneous of degree zero in $y$ for all $1 \leq i,j \leq n$.
\end{itemize}
The pair $(M,F)$ is said to be  AR-Finsler manifold. In addition if $\eta$ is a rational function in $y$, we call $F$ as rational Finsler metric and the pair $(M,F)$ as  rational Finsler structure.
\end{Def}
If we relax condition (b) as the matrix $(a_{ij}(x,y))_{1 \leq i,j \leq n}$ is nondegenerate (respectively, degenerate)  then, we  deal with\textit{ pseudo} or \textit{nondegenerate (respectively, degenerate) AR-Finsler metric}.
\begin{rem}
It is clear that from expression $\eqref{gdef}$, the  metric tensor $g_{ij}(x,y)$ are rational functions in $y$ if and only if $\eta$ is a rational function in $y$. For example, in \cite[Examples 3, 4]{semi} the function $\eta$  is a rational function in $y$.
\end{rem}
\begin{lem}\label{prelem}
Let $(M,F)$ be an AR-Finsler manifold. Then we have the following:
\begin{itemize}
\item[(1)] The function $\frac{F^{2}}{\eta} (x,y)$ is rational in $y$,
\item[(2)]The inverse metric tensor of $F$ is given by $g^{ij}(x,y)= \frac{a^{ij}(x,y)}{\eta(x,y)},$
\item[(3)] The expression $\{ \eta\, \dot{\pa}_i a_{jk} + a_{jk}\, \dot{\pa}_i \eta \} $ is symmetric in the indices $i,j,k$.
\end{itemize}
\end{lem}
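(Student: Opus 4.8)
The plan is to treat the three assertions separately, since each follows quickly from one structural feature of $F$; the only subtlety common to (1) and (2) is that one must know $\eta$ is nonzero where it is divided out. First I would record that on $TM\setminus\{0\}$ the matrix $g_{ij}=\eta\,a_{ij}$ is positive-definite while, by hypothesis (b) in the definition, $(a_{ij})$ is positive-definite as well; hence $\eta(x,y)>0$ there, and every division by $\eta$ below is legitimate. For (2), since $(a_{ij})$ is invertible with inverse $(a^{ij})$, the candidate matrix $\eta^{-1}(a^{ij})$ satisfies $\dfrac{a^{ij}}{\eta}\,\eta\,a_{jk}=a^{ij}a_{jk}=\delta^i_k$, so by uniqueness of the matrix inverse $g^{ij}=a^{ij}/\eta$.

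For (1), I would invoke Euler's theorem for positively homogeneous functions. Because $F^2$ is positively homogeneous of degree two in $y$, the function $\dot\partial_j F^2$ is homogeneous of degree one, so $y^i\dot\partial_i\dot\partial_j F^2=\dot\partial_j F^2$, i.e. $\dot\partial_j F^2=2g_{ij}y^i$, and contracting once more with $y^j$ gives $F^2=g_{ij}y^iy^j$. Substituting $g_{ij}=\eta\,a_{ij}$ yields $F^2/\eta=a_{ij}y^iy^j$, a finite sum of products of the rational functions $a_{ij}(x,y)$ with the polynomials $y^iy^j$, hence a rational function of $y$.

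For (3), I would start from the definition of the Cartan torsion, $C_{ijk}=\tfrac12\dot\partial_k g_{ij}=\tfrac14\dot\partial_i\dot\partial_j\dot\partial_k F^2$, which is totally symmetric in $i,j,k$ because mixed partial derivatives commute. Applying the Leibniz rule to $g_{ij}=\eta\,a_{ij}$ gives $2C_{ijk}=\eta\,\dot\partial_k a_{ij}+a_{ij}\,\dot\partial_k\eta$; relabelling the indices, $\eta\,\dot\partial_i a_{jk}+a_{jk}\,\dot\partial_i\eta=2C_{jki}$, and by the total symmetry of $C$ this is symmetric in $i,j,k$, as claimed. I do not anticipate a genuine obstacle anywhere in the argument: the content is entirely in the homogeneity of $F^2$ and the built-in symmetry of the Cartan tensor, and the one point deserving a line of care is the strict positivity of $\eta$ on the slit tangent bundle, which has already been secured from the positive-definiteness clauses of the definition.
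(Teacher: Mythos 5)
Your proposal is correct and follows essentially the same route as the paper: part (1) rests on the identity $F^2=g_{ij}y^iy^j$ and closure of rational functions under products and sums, part (2) on uniqueness of the matrix inverse, and part (3) on the total symmetry of the Cartan tensor applied to the Leibniz expansion of $\dot\partial_k(\eta\,a_{ij})$. Your additional observation that $\eta>0$ on the slit tangent bundle (so the divisions are legitimate) is a worthwhile point of care that the paper leaves implicit.
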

\begin{proof}
$(1)$ follows directly from the fact that the multiplication of rational functions always results a rational function and $\frac{F^{2}}{\eta} (x,y)= a_{ij}(x,y)\, y^i\, y^j$. $(2)$ is straight forward from the fact $g_{ij}\,g^{ik}= \delta^k_j$. 
$(3)$ follows by plugging $\eqref{gdef}$ into the Cartan torsion formula $C_{ijk}=\frac{1}{2}\dot{\partial}_kg_{ij}$, which is totally symmetric in its indices.
\end{proof}

\begin{prop}\label{pa dot log eta rat}
Let $(M,F)$ be an $n$-dimensional AR-Finsler manifold, $n\geq 2$. Then, the function $\dot{\pa}_{i} \log(\eta(x,y))$ is a rational function in $y$ for all $i=1,...,n$.

\end{prop}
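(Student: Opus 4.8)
The plan is to extract from the total symmetry of the Cartan torsion recorded in Lemma~\ref{prelem}(3) a linear system for the quantities $\rho_i := \dot{\partial}_i\log(\eta(x,y)) = \dot{\partial}_i\eta/\eta$, and then to solve that system by contracting with the (rational) inverse matrix $a^{ij}$.

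First I would note that $\eta$ is strictly positive on $TM\setminus\{0\}$: since $g_{ij}=\eta\,a_{ij}$ is positive definite and $(a_{ij})$ is positive definite, $\eta$ cannot vanish, so $\rho_i=\dot{\partial}_i\eta/\eta$ is well defined and smooth there. Dividing the expression $\eta\,\dot{\partial}_i a_{jk}+a_{jk}\,\dot{\partial}_i\eta$ from Lemma~\ref{prelem}(3) by $\eta$ shows that
\[
\dot{\partial}_i a_{jk} + a_{jk}\,\rho_i
\]
is symmetric in $i,j,k$. Exchanging $i$ and $j$ and rearranging gives
\[
a_{jk}\,\rho_i - a_{ik}\,\rho_j = \dot{\partial}_j a_{ik} - \dot{\partial}_i a_{jk} =: R_{ijk},
\]
where the right-hand side $R_{ijk}$ is a rational function in $y$ because each $a_{ik}$ is rational in $y$ and $\dot{\partial}_j$ preserves rationality.

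Next I would contract this identity with $a^{jk}$, the inverse matrix of $(a_{jk})$, which is rational in $y$ by Cramer's rule (this is, in effect, Lemma~\ref{prelem}(2)). Using $a^{jk}a_{jk}=n$ and $a^{jk}a_{ik}=\delta^j_i$ yields
\[
a^{jk}R_{ijk} = n\,\rho_i - \delta^j_i\,\rho_j = (n-1)\,\rho_i .
\]
Since $n\geq 2$, the factor $n-1$ is nonzero, so $\rho_i = \tfrac{1}{n-1}\,a^{jk}R_{ijk}$, a finite sum of products of rational functions in $y$, hence rational; this is exactly the assertion, as $\dot{\partial}_i\log(\eta)=\rho_i$.

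The calculation has no serious obstacle once the right move is spotted. The one subtlety is that the system $a_{jk}\,\rho_i-a_{ik}\,\rho_j=R_{ijk}$ does not by itself isolate $\rho_i$; the essential trick is to take the $a^{jk}$-trace, which collapses the unwanted term $a_{ik}\,\rho_j$ to $\rho_i$ and produces the factor $n-1$ — and it is precisely here that the hypothesis $n\geq 2$ enters.
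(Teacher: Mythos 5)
Your argument is correct and is essentially the paper's own proof: both exploit the total symmetry of the Cartan torsion (Lemma~\ref{prelem}(3)) to produce the identity $a_{jk}\,\dot{\pa}_i\log\eta - a_{ik}\,\dot{\pa}_j\log\eta = \dot{\pa}_j a_{ik}-\dot{\pa}_i a_{jk}$ and then contract with the rational inverse $a^{jk}$ to obtain $(n-1)\,\dot{\pa}_i\log\eta$ as a rational expression. The only differences are cosmetic (which pair of indices you swap and whether you divide by $\eta$ before or after), plus your welcome observation that positive definiteness forces $\eta>0$ so that $\log\eta$ is defined.
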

\begin{proof}
It is clear that $2C_{ijk}=2 C_{kij}$. That is, in the view of Lemma \ref{prelem}~(3),
\begin{eqnarray*}
\eta\, \dot{\pa}_i a_{jk} + a_{jk}\, \dot{\pa}_i \eta &=& \eta\, \dot{\pa}_k a_{ij} + a_{ij}\, \dot{\pa}_k \eta\,\, \Longleftrightarrow \\
 \eta \left(\dot{\pa}_i a_{jk} - \dot{\pa}_k a_{ij}  \right) &=& -a_{jk}\, \dot{\pa}_i \eta + a_{ij}\, \dot{\pa}_k \eta.
\end{eqnarray*} 
Multiplying both sides of the previous relation by $a^{ji}$, we get
 \[
 \eta \, a^{ji}\left(\dot{\pa}_i a_{jk} - \dot{\pa}_k a_{ij}  \right) = -\delta^i_k\, \dot{\pa}_i \eta + n\, \dot{\pa}_k \eta = (n-1)\,\dot{\pa}_k \eta.\]
Thus, we have
\be \label{dot log eta rat}
\dot{\pa}_k \log(\eta) = \frac{1}{n-1} \,a^{ji}\,\left(\dot{\pa}_i a_{jk} - \dot{\pa}_k a_{ij}\right). 
\ee
Therefore, $\dot{\pa}_k \log\left(\eta(x,y)\right)$ are rational functions in $y$.
\end{proof}
It is known that the partial derivatives $\dot{\pa}_k$ and  $\pa_{l}$ commutes. Also, the differential of any rational function $f(x,y)$ in $y$ with respect to $x$ remains rational in $y$, thereby, we have:
\begin{cor}\label{commute}
The function $\dot{\pa}_k \pa_{k_1} \pa_{k_2}... \pa_{k_r}\log\left(\eta(x,y)\right)$ is a rational function in $y$ for all $1 \leq k ,k_{1},...,k_{r}\leq n,\,\, r \in \mathbb{N}$.
\end{cor}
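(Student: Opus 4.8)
The plan is to reduce the statement to the single identity \eqref{dot log eta rat} obtained in the proof of Proposition \ref{pa dot log eta rat}, together with two elementary closure properties of the set of functions that are rational in $y$. Recall that a function is rational in $y$ if in each chart it has the form $P(x,y)/Q(x,y)$ with $P,Q$ polynomial in $y^1,\dots,y^n$ and smooth in $x$; such functions form a ring which is closed under the vertical derivatives $\dot{\pa}_i$ (differentiating a quotient of polynomials in $y$ yields another quotient of polynomials in $y$) and also closed under the derivatives $\pa_l$, since $\pa_l(P/Q)=(Q\,\pa_l P-P\,\pa_l Q)/Q^2$ and $\pa_l$ only acts on the $x$-dependent coefficients of $P$ and $Q$. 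The latter is precisely the remark preceding the corollary; the former, together with the fact that the inverse matrix $a^{ij}$ of $(a_{ij})$ is rational in $y$ by Cramer's rule (cf.~Lemma \ref{prelem}(2) and hypothesis (b) in the definition of an AR-Finsler metric), guarantees that the right-hand side of \eqref{dot log eta rat} is rational in $y$.

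Next I would use the key structural fact that $\dot{\pa}_k$ and $\pa_l$ commute, so that
\[
\dot{\pa}_k\,\pa_{k_1}\pa_{k_2}\cdots\pa_{k_r}\log(\eta)\;=\;\pa_{k_1}\pa_{k_2}\cdots\pa_{k_r}\bigl(\dot{\pa}_k\log(\eta)\bigr).
\]
By \eqref{dot log eta rat} we have $\dot{\pa}_k\log(\eta)=\tfrac{1}{n-1}\,a^{ji}\bigl(\dot{\pa}_i a_{jk}-\dot{\pa}_k a_{ij}\bigr)$, which is rational in $y$. Applying $\pa_{k_1},\dots,\pa_{k_r}$ in succession and invoking closure of the ring of functions rational in $y$ under $\pa_l$ at each of the $r$ steps, the right-hand side of the displayed identity, and hence its left-hand side, is rational in $y$. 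Since $k,k_1,\dots,k_r$ and $r$ are arbitrary, this is exactly the asserted conclusion.

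I expect no real obstacle here: once \eqref{dot log eta rat} is available, the argument is pure bookkeeping with the two closure properties. The only point worth spelling out is that these properties genuinely hold---closure under $\dot{\pa}_i$ and under $\pa_l$---together with the rationality of $a^{ij}$ in the first of them; all three follow immediately from the quotient rule and Cramer's formula $a^{ij}=\mathrm{cof}_{ij}(a)/\det(a)$, so there is nothing deep to carry out.
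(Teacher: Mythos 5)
Your argument is correct and is essentially the paper's own: the authors likewise derive the corollary from Proposition \ref{pa dot log eta rat} by commuting $\dot{\pa}_k$ with the $\pa_{k_j}$ and using that rationality in $y$ is preserved under differentiation in $x$. You merely spell out the closure properties and the role of \eqref{dot log eta rat} more explicitly than the paper does.
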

\begin{rem}
Given an AR-Finsler manifold, the rationality of the associated  Cartan torsion $C_{ijk}(x,y)$  in $y$ depends on the rationality of the function $\eta$ in $y$. This results from the expression \[C_{ijk} =  \eta\, \dot{\pa}_i a_{jk} + a_{jk}\, \dot{\pa}_i \eta.\] In other words, $C_{ijk}(x,y)$  are rational functions in $y$ if and only if $\eta$ is rational  function in $y$. However, the following proposition shows that rationality of the mean Cartan torsion of an AR-Finsler manifold is independent of the rationality of the function $\eta$.
\end{rem}

\begin{prop}\label{main scalar rat}
Let $(M,F)$ be an AR-Finsler manifold. Then, the mean Cartan torsion  is rational  in $y$.
\end{prop}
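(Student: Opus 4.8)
The plan is to compute the mean Cartan torsion $I_k = g^{ij}C_{ijk}$ directly in terms of the data $(\eta, a_{ij})$ and to observe that the (possibly irrational) factor $\eta$ survives in the final expression only through $\dot{\partial}_k\log\eta$, which Proposition~\ref{pa dot log eta rat} has already shown to be rational in $y$.

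First I would substitute $g^{ij} = a^{ij}/\eta$ from Lemma~\ref{prelem}~(2) together with $C_{ijk} = \tfrac12\dot{\partial}_k g_{ij} = \tfrac12\bigl(a_{ij}\,\dot{\partial}_k\eta + \eta\,\dot{\partial}_k a_{ij}\bigr)$, the latter being obtained by differentiating \eqref{gdef}. This yields
\[
I_k = g^{ij}C_{ijk} = \frac{a^{ij}}{\eta}\cdot\frac12\bigl(a_{ij}\,\dot{\partial}_k\eta + \eta\,\dot{\partial}_k a_{ij}\bigr) = \frac12\left(\frac{a^{ij}a_{ij}}{\eta}\,\dot{\partial}_k\eta + a^{ij}\,\dot{\partial}_k a_{ij}\right).
\]
Since $(a^{ij})$ is the inverse of the symmetric matrix $(a_{ij})$, we have $a^{ij}a_{ij} = \delta^i_i = n$, hence
\[
I_k = \frac12\bigl(n\,\dot{\partial}_k\log\eta + a^{ij}\,\dot{\partial}_k a_{ij}\bigr).
\]

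It then remains to check rationality in $y$ term by term. By hypothesis~(b) each $a_{ij}(x,y)$ is rational in $y$; therefore each entry $a^{ij}$ of the inverse matrix is rational in $y$ (Cramer's rule), and so is each $\dot{\partial}_k a_{ij}$, since differentiation in $y$ preserves rationality. Thus the finite sum $a^{ij}\,\dot{\partial}_k a_{ij}$ is rational in $y$. For the remaining term, Proposition~\ref{pa dot log eta rat} states precisely that $\dot{\partial}_k\log(\eta(x,y))$ is rational in $y$. Consequently $I_k$ is rational in $y$ for every $k$, and hence $I = I_k\,dx^k$ is rational in $y$. There is no real obstacle in this argument; the only point worth stressing is that, unlike $C_{ijk}$ itself, the trace $g^{ij}C_{ijk}$ cancels the factor $\eta$ up to its logarithmic $y$-derivative, which is exactly what Proposition~\ref{pa dot log eta rat} controls. (One also recovers the familiar identity $I_k = \tfrac12\,\dot{\partial}_k\log\det(g_{ij})$, consistent with $\det(g_{ij}) = \eta^n\det(a_{ij})$.)
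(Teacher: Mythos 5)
Your proof is correct and follows essentially the same route as the paper: substitute $g^{jk}=a^{jk}/\eta$ and $C_{ijk}$ expressed through $\eta$ and $a_{ij}$, use $a^{jk}a_{jk}=n$ to reduce everything to $a^{jk}\dot{\partial}_i a_{jk}$ plus a multiple of $\dot{\partial}_i\log\eta$, and invoke Proposition~\ref{pa dot log eta rat}. You are in fact slightly more careful than the paper, which drops the factor $\tfrac12$ from $C_{ijk}=\tfrac12\dot{\partial}_k g_{ij}$ in its displayed formula; this does not affect the rationality conclusion.
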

\begin{proof}
 By Lemma~\ref{prelem}~(2), we get
\be \label{main saclar formula}
I_{i}=g^{jk}\,C_{ijk}= \frac{a^{jk}}{\eta} \left( \eta\, \dot{\pa}_i a_{jk} + a_{jk}\, \dot{\pa}_i \eta \right)= a^{jk}  \dot{\pa}_i a_{jk} + n \, \dot{\pa}_i \log(\eta) .
\ee
  Therefore, the proof is completed by the use of Proposition \ref{pa dot log eta rat}.
\end{proof}
In the view of formulae \eqref{dot log eta rat} and \eqref{main saclar formula}, we have the following result.
\begin{cor}
Let $(M,F)$ be an $n$-dimensional AR-Finsler manifold, $n \geq 2$. Then,  the mean Cartan torsion $I_k$ can be expressed as follows:
\[I_k = \frac{1}{n-1}\, a^{rs} \left( n\, \dot{\pa}_r a_{sk} - \dot{\pa}_k a_{rs}  \right). \]
\end{cor}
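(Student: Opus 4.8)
The plan is to obtain the stated formula by simply merging the two identities already at our disposal: equation \eqref{dot log eta rat}, which expresses $\dot{\pa}_k\log(\eta)$ purely in terms of the $a_{ij}$ and $a^{ij}$, and equation \eqref{main saclar formula}, which reads $I_i = a^{jk}\dot{\pa}_i a_{jk} + n\,\dot{\pa}_i\log(\eta)$. First I would rename the contracted indices in \eqref{main saclar formula} so that $I_k = a^{rs}\dot{\pa}_k a_{rs} + n\,\dot{\pa}_k\log(\eta)$, and then substitute for $\dot{\pa}_k\log(\eta)$ via \eqref{dot log eta rat}, using the same dummy indices $r,s$. This turns $I_k$ into $a^{rs}\dot{\pa}_k a_{rs} + \frac{n}{n-1}\,a^{rs}\bigl(\dot{\pa}_s a_{rk} - \dot{\pa}_k a_{rs}\bigr)$.

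Next I would collect the two terms proportional to $a^{rs}\dot{\pa}_k a_{rs}$; their combined coefficient is $1 - \frac{n}{n-1} = -\frac{1}{n-1}$, leaving $I_k = \frac{1}{n-1}\,a^{rs}\bigl(n\,\dot{\pa}_s a_{rk} - \dot{\pa}_k a_{rs}\bigr)$. Finally, since $(a^{rs})$ is symmetric, a harmless relabelling of the dummy indices gives $a^{rs}\dot{\pa}_s a_{rk} = a^{rs}\dot{\pa}_r a_{sk}$, and we arrive at the asserted identity $I_k = \frac{1}{n-1}\,a^{rs}\bigl(n\,\dot{\pa}_r a_{sk} - \dot{\pa}_k a_{rs}\bigr)$.

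I do not anticipate any real difficulty here; the computation is purely algebraic, and the only steps needing a moment's care are the consistent choice of dummy indices when the two identities are combined and the use of the symmetry of the inverse matrix $(a^{ij})$ in the last step. Conceptually this corollary is just the version of Proposition \ref{main scalar rat} in which the function $\eta$ has been eliminated altogether, displaying $I_k$ as an explicitly rational expression in the data $a_{ij},\, a^{ij}$.
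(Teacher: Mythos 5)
Your computation is correct and is exactly what the paper intends: the corollary is stated as an immediate consequence of combining \eqref{dot log eta rat} with \eqref{main saclar formula}, and your substitution, collection of the $a^{rs}\dot{\pa}_k a_{rs}$ terms with coefficient $1-\tfrac{n}{n-1}=-\tfrac{1}{n-1}$, and final relabelling using the symmetry of $a^{rs}$ reproduce the stated identity.
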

\begin{prop}
Let $(M,F)$ be an $n$-dimensional AR-Finsler manifold. Then, we have
\be \label{delta log eta}
 \delta_{k} \log\left(\eta\right) = - \frac{1}{n}\, a^{ij}\, a_{ij|k}, \ee where  $|$ is the horizontal covariant derivative with respect to the Berwald connection.
\end{prop}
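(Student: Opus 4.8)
The plan is to compute $\delta_k \log(\eta)$ directly from the horizontal covariant derivative of the fundamental relation $g_{ij} = \eta\, a_{ij}$, and then simplify using the fact that the Berwald-covariant derivative of the metric tensor is governed by the Landsberg curvature, whose trace contracts against $a^{ij}$ in a controlled way. First I would recall that $g_{ij|k} = -2 L_{ijk}$ (the standard identity for Berwald covariant differentiation of the metric tensor, equivalently $g_{ij|k} = \partial_k g_{ij} - g_{lj} G^l_{ik} - g_{il} G^l_{jk}$ evaluated along the horizontal distribution). Applying $\delta_k$ — equivalently the horizontal covariant derivative — to $g_{ij} = \eta\, a_{ij}$ and using that $\eta$ is a scalar function (so its covariant derivative is just $\delta_k \eta$), I get
\[
g_{ij|k} = (\delta_k \eta)\, a_{ij} + \eta\, a_{ij|k}.
\]

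Next I would contract both sides with the inverse metric $g^{ij} = a^{ij}/\eta$ from Lemma~\ref{prelem}~(2). On the left, $g^{ij} g_{ij|k}$ is the trace of $-2L_{ijk}$, which is $-2 J_k$; but more usefully for the stated formula, one also has $g^{ij} g_{ij|k} = \delta_k \log \det(g_{ij}) = \delta_k\big(\log \eta^n + \log\det(a_{ij})\big)$, so the left side equals $n\,\delta_k\log\eta + a^{ij} a_{ij|k}$ after noting $g^{ij}g_{ij|k} = \frac{a^{ij}}{\eta}\big((\delta_k\eta)a_{ij} + \eta\,a_{ij|k}\big) = n\,\delta_k\log\eta + a^{ij}a_{ij|k}$ — which is in fact exactly the contraction of the displayed equation above. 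The identity $\delta_k \log \det(g) = g^{ij} g_{ij|k}$ combined with $\det(g) = \eta^n \det(a)$ then also gives $g^{ij}g_{ij|k} = n\,\delta_k\log\eta + a^{ij}a_{ij|k}$ a second way, but the cleanest route is simply to contract the displayed equation: $g^{ij}g_{ij|k} = n\,\delta_k\log\eta + a^{ij}a_{ij|k}$ and this must vanish because $g^{ij} g_{ij|k} = 0$ — indeed $g^{ij}g_{ij} = n$ is constant, so its horizontal covariant derivative is zero, whence $g^{ij}g_{ij|k} = -g_{ij}(g^{ij})_{|k}$, and using metric compatibility patterns one checks this contraction is zero (equivalently $\delta_k(\log n) = 0$). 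Setting $n\,\delta_k\log\eta + a^{ij}a_{ij|k} = 0$ yields
\[
\delta_k \log(\eta) = -\frac{1}{n}\, a^{ij}\, a_{ij|k},
\]
which is \eqref{delta log eta}.

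The main obstacle is handling the covariant derivatives carefully: $a_{ij}$ is \emph{not} the fundamental tensor of a Riemannian metric, so $a_{ij|k}$ need not vanish, and one must be scrupulous that the Berwald connection coefficients $G^l_{ik}$ act identically on $g_{ij}$ and on the pair $(\eta, a_{ij})$ via the Leibniz rule — this is legitimate because $\delta_k$ and the full covariant derivative $|k$ differ only by connection terms that are tensorial and distribute over the product $\eta\, a_{ij}$, with $\eta$ being a scalar (zero-homogeneous) function so that $\eta_{|k} = \delta_k\eta$ exactly. The key simplifying input is the elementary fact that $g^{ij} g_{ij|k} = 0$, which follows from differentiating the constant $g^{ij}g_{ij} = n$; once that is in hand, contracting the Leibniz expansion of $g_{ij|k}$ with $g^{ij}$ gives the result immediately with no further computation.
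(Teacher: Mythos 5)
Your overall strategy --- apply the Leibniz rule to $g_{ij}=\eta\,a_{ij}$ under the horizontal covariant derivative and then contract with $g^{ij}=a^{ij}/\eta$, using $a^{ij}a_{ij}=n$ --- is exactly the paper's, but the step that closes your argument is invalid. You need $g^{ij}g_{ij|k}=0$, and your justification (differentiate the constant $g^{ij}g_{ij}=n$) is circular: it gives $g^{ij}g_{ij|k}+g_{ij}(g^{ij})_{|k}=0$, and since $(g^{ij})_{|k}=-g^{ia}g^{jb}g_{ab|k}$, the second term equals $-g^{ab}g_{ab|k}$, so the identity collapses to $0=0$ and carries no information. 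Worse, you correctly recorded at the outset that $g_{ij|k}=-2L_{ijk}$ for the Berwald connection, whence $g^{ij}g_{ij|k}=-2J_k$; asserting that this trace vanishes amounts to asserting that every AR-Finsler manifold is weakly Landsberg, which is false (and would trivialize one of the paper's later theorems on exactly that property). The phrase ``using metric compatibility patterns one checks this contraction is zero'' is precisely the point at issue: the Berwald connection is not $h$-metrical, which is why $L_{ijk}$ appears in the first place.

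For comparison, the paper's own proof simply invokes $g_{ij|k}=0$ as known and then runs the computation you wrote. That identity is genuinely valid for the Chern or Cartan connection; for the Berwald connection one has $g_{ij|k}=-2L_{ijk}$, so, taken literally, the displayed formula should carry an extra term $-\tfrac{2}{n}\,J_k$ on the right-hand side --- your own opening line already shows this. So your instinct to start from $g_{ij|k}=-2L_{ijk}$ was sound, and arguably more careful than the paper; the gap is that you then erased the $J_k$ contribution by an invalid trace argument rather than either keeping it or passing to a metric-compatible connection for which $g_{ij|k}=0$ actually holds.
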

\begin{proof}
It is known that $g_{ij|k}=0$. Thus, by $\eqref{gdef}$, we obtain
\[0= (\eta \, a_{ij})_{|k} = \eta _{|k}\, a_{ij} + \eta \,a_{ij|k} = (\delta_{k} \eta)\, a_{ij} + \eta \,a_{ij|k} \Longleftrightarrow 
(\delta_{k} \eta)\, a_{ij} =- \eta \,a_{ij|k}. \]
Then, we have
 \[a_{ij}\, \delta_{k}\log(\eta) = -a_{ij|k}.\]
 Hence, the proof is completed by multiplying both sides of the last relation by $a^{ij}$ and taking into account $a^{ij}\, a_{ij}=n$.
\end{proof}
\begin{thm}\label{Riem}
Let $(M,F)$ be a regular AR-Finsler manifold. Then, $(M,F)$ is Riemannian if and only if the functions $\dot{\pa}_i \log(\eta)$ have the following form:
\be
\dot{\pa}_i \log(\eta) = -\frac{1}{n} \,a^{jk}\,\dot{\pa}_i a_{jk}  
\ee
\end{thm}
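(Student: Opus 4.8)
The plan is to reduce the statement to Deicke's theorem, which is quoted just after the definition of the Cartan torsion: a \emph{regular} Finsler metric is Riemannian if and only if its mean Cartan torsion $I_k$ vanishes identically. Since the manifold is assumed regular, this equivalence is available, so it suffices to determine precisely when $I_k\equiv 0$ for an AR-Finsler manifold.

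The key ingredient is already at hand. In the proof of Proposition \ref{main scalar rat}, formula \eqref{main saclar formula}, it was shown that for an AR-Finsler metric
\[
I_i = a^{jk}\,\dot{\pa}_i a_{jk} + n\,\dot{\pa}_i\log(\eta),
\]
where Lemma \ref{prelem}(2) and the identity $a^{jk}a_{jk}=n$ have been used. From this the equivalence is immediate. If $F$ is Riemannian, then $I_i=0$ for every $i$ by Deicke's theorem, hence $n\,\dot{\pa}_i\log(\eta) = -a^{jk}\dot{\pa}_i a_{jk}$, which is exactly the asserted form of $\dot{\pa}_i\log(\eta)$. Conversely, if $\dot{\pa}_i\log(\eta) = -\frac{1}{n}\,a^{jk}\,\dot{\pa}_i a_{jk}$ holds for all $i$, substituting this back into the displayed formula gives $I_i=0$ for every $i$, and another application of Deicke's theorem (here is where regularity is used) forces $(M,F)$ to be Riemannian.

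There is essentially no computational obstacle; the only point requiring care is that the regularity hypothesis, i.e.\ positive-definiteness of $g_{ij}$, is genuinely needed, since Deicke's theorem fails for merely non-degenerate metric tensors. One could instead argue directly from the characterization $C_{ijk}=0$, which for an AR-metric reads $\eta\,\dot{\pa}_i a_{jk} + a_{jk}\,\dot{\pa}_i\eta = 0$; but this does not exhibit the condition in the compact trace form demanded by the statement, whereas passing through the mean Cartan torsion produces precisely the identity $\dot{\pa}_i\log(\eta) = -\frac{1}{n}\,a^{jk}\,\dot{\pa}_i a_{jk}$.
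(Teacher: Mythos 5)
Your proof is correct and follows essentially the same route as the paper: both reduce the statement to the vanishing of the mean Cartan torsion $I_i$ and invoke Deicke's theorem for the nontrivial converse, the only cosmetic difference being that you reuse the already-derived formula $I_i = a^{jk}\dot{\pa}_i a_{jk} + n\,\dot{\pa}_i\log(\eta)$ while the paper recomputes the contraction starting from $C_{ijk}=0$. (Note that in the forward direction Deicke is not actually needed, since Riemannian $\Rightarrow C_{ijk}=0 \Rightarrow I_i=0$ is immediate from the definitions, which is how the paper argues.)
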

\begin{proof}
Suppose $(M,F)$ is Riemannian. 
Thus, $C_{ijk}=0$, this means that, 
\begin{eqnarray*}
0=\eta\, \dot{\pa}_i a_{jk} + a_{jk}\, \dot{\pa}_i \eta \Longrightarrow   a_{jk}\, \dot{\pa}_i \log(\eta) = - \dot{\pa}_i a_{jk} \Longrightarrow n \, \dot{\pa}_i \log(\eta) = -  a^{jk}\, \dot{\pa}_i a_{jk}.
\end{eqnarray*} 
For the converse, assume that $\dot{\pa}_i \log(\eta) = -\frac{1}{n} \,a^{jk}\,\dot{\pa}_i a_{jk}$. Then, we get 
\begin{eqnarray*}
 n \, \dot{\pa}_i \log(\eta) = -  a^{jk}\, \dot{\pa}_i a_{jk} \Longrightarrow   n \, \dot{\pa}_i \eta = -\eta\, a^{jk}\, \dot{\pa}_i a_{jk} \Longrightarrow a^{jk}\,a_{jk}\,\dot{\pa}_i \eta = -\eta\, a^{jk}\, \dot{\pa}_i a_{jk}
 && \\
 \Longleftrightarrow a^{jk}\,( \eta\, \dot{\pa}_i a_{jk} + a_{jk}\, \dot{\pa}_i \eta )=0\Longleftrightarrow  a^{jk}\,C_{ijk}=0.
 \end{eqnarray*}
 In view of Lemma \ref{prelem} (2), \[a^{jk}\,C_{ijk}=0 \Longrightarrow \frac{1}{\eta}\, a^{jk}\,C_{ijk}=0 \Longleftrightarrow g^{jk}\,C_{ijk}=0 \Longleftrightarrow I_{i}=0.\]
Therefore, $(M,F)$ is Riemannian space by Deicke's theorem \cite{Deicke}.
\end{proof}
\begin{cor}
Let $(M,F)$ be a regular AR-Finsler manifold,  $n \geq 2$. Then, the necessary and sufficient condition for $(M,F)$ to be Riemannian is
\be 
a^{jk}\,\left( n\,\dot{\pa}_k a_{ji} - \dot{\pa}_i a_{jk}\right)=0.
\ee
\end{cor}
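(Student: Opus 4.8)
The plan is to obtain this directly from Theorem \ref{Riem} by eliminating $\eta$ using the unconditional identity \eqref{dot log eta rat} of Proposition \ref{pa dot log eta rat}. By Theorem \ref{Riem}, $(M,F)$ is Riemannian if and only if
\[
\dot{\pa}_i \log(\eta) = -\frac{1}{n}\, a^{jk}\, \dot{\pa}_i a_{jk},
\]
whereas on \emph{any} AR-Finsler manifold with $n\geq 2$ formula \eqref{dot log eta rat} gives, after relabelling the free index,
\[
\dot{\pa}_i \log(\eta) = \frac{1}{n-1}\, a^{jk}\left(\dot{\pa}_k a_{ji} - \dot{\pa}_i a_{jk}\right).
\]
Since the second expression always holds, the first one holds \emph{iff} the two right-hand sides are equal; so the first step is simply to set them equal.

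The second step is the elementary algebra. Multiplying the resulting equation by $-n(n-1)$—which is legitimate precisely because $n\geq 2$, so $n-1\neq 0$—and expanding $a^{jk}(\dot{\pa}_k a_{ji}-\dot{\pa}_i a_{jk})$, the two terms proportional to $a^{jk}\dot{\pa}_i a_{jk}$ combine to a single $a^{jk}\dot{\pa}_i a_{jk}$ on each side and cancel, leaving exactly $a^{jk}\bigl(n\,\dot{\pa}_k a_{ji} - \dot{\pa}_i a_{jk}\bigr)=0$, which is the asserted condition. Conversely, reversing these reversible steps recovers the Theorem \ref{Riem} criterion, so the equivalence is two-sided.

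I expect no real obstacle here; the statement is essentially a reformulation of Deicke's theorem. Indeed, the Corollary following Proposition \ref{main scalar rat} says $I_i = \tfrac{1}{n-1}\,a^{jk}\bigl(n\,\dot{\pa}_k a_{ji} - \dot{\pa}_i a_{jk}\bigr)$ (using symmetry of $a^{jk}$ and of $a_{jk}$ to match dummy indices), so the displayed condition is just $(n-1)\,I_i = 0$, i.e. $I_i=0$, which by Deicke's theorem characterises Riemannian metrics; here the hypothesis $n\geq 2$ is exactly what makes $n-1$ invertible. The only care needed is the routine index bookkeeping when aligning the summation indices $j,k$ between \eqref{dot log eta rat}, Theorem \ref{Riem}, and the $I_i$-formula. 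I would present both routes briefly—the $\eta$-elimination and the one-line Deicke argument—for transparency.
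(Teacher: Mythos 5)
Your primary argument—setting the Theorem \ref{Riem} criterion equal to the unconditional identity \eqref{dot log eta rat} and clearing denominators using $n-1\neq 0$—is exactly the paper's proof, and your algebra checks out. Your secondary observation, that the displayed condition is just $(n-1)\,I_i=0$ via the mean-Cartan-torsion formula and hence equivalent to Riemannian by Deicke's theorem, is a correct and slightly more direct alternative, but it is not needed and the paper does not take that route.
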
 
\begin{proof}
By making use of Theorem \ref{Riem} and formula \eqref{dot log eta rat}, we get
\[\dot{\pa}_i \log(\eta) = -\frac{1}{n} \,a^{jk}\,\dot{\pa}_i a_{jk} = \frac{1}{n-1} \,a^{jk}\,\left(\dot{\pa}_k a_{ji} - \dot{\pa}_i a_{jk} \right), \] which is equivalent to
{\small{\[\frac{1}{n-1} \,a^{jk}\, \dot{\pa}_k a_{ji} + \left(\frac{1}{n} -\frac{1}{n-1} \right)a^{jk}\, \dot{\pa}_i a_{jk} =0 \Longleftrightarrow  \frac{1}{n(n-1)} \,a^{jk}\,\left( n\dot{\pa}_k a_{ji} - \dot{\pa}_i a_{jk}\right)=0.\]}}
\vspace*{-1.1cm}\[\qedhere\]
\end{proof}

\begin{prop}\label{G eta rat}
Let $(M,F)$ be an AR-Finsler manifold. Then, the geodesic spray coefficients $G^i(x,y)$ of $F$ are rational functions in $y$ if and only if $\pa_k\log\left(\eta(x,y)\right)$ are rational functions in $y$.
\end{prop}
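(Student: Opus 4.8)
The plan is to start from the second expression for the spray coefficients in $\eqref{Fspray def}$, namely $G^i = \tfrac12\, g^{ir}\left(y^k y^s\, \pa_k g_{rs} - 2\, y^l y^s\, \pa_r g_{ls}\right)$, and substitute the AR-decomposition $g_{rs} = \eta\, a_{rs}$ together with $g^{ir} = a^{ir}/\eta$ from Lemma \ref{prelem}~(2). Expanding $\pa_k g_{rs} = (\pa_k \eta)\, a_{rs} + \eta\, (\pa_k a_{rs})$ and collecting, one $\eta$ cancels against the $1/\eta$ in the inverse metric, and $G^i$ splits into two pieces: a piece built purely out of $a_{rs}$, $a^{ir}$ and their $x$-derivatives $\pa_k a_{rs}$ (which is manifestly rational in $y$, since sums, products and quotients of rational functions are rational, and $\pa_k$ preserves rationality in $y$), plus a piece proportional to $\pa_k\log(\eta)$, schematically of the form $\tfrac12\, a^{ir}\left(y^k y^s\, a_{rs}\, \pa_k\log\eta - 2\, y^l y^s\, a_{ls}\, \pa_r\log\eta\right)$. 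This identity is the crux: it exhibits $G^i$ as (rational function in $y$) $+$ (rational function in $y$)$\,\cdot\,$($\pa_k\log\eta$ terms).

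From this identity the forward direction is immediate: if every $\pa_k\log\eta$ is rational in $y$, then both pieces are rational, hence $G^i$ is rational in $y$.

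For the converse, I would solve the identity for the $\pa_k\log\eta$ contributions. The cleanest route is to use homogeneity: $F^2 = \eta\, a_{rs} y^r y^s$ and $\eta\, a_{rs}$ is $0$-homogeneous in $y$ by condition (c) of the definition, so $F^2/\eta = a_{rs} y^r y^s$ is $2$-homogeneous and rational (Lemma \ref{prelem}~(1)). Write $P := F^2/\eta = a_{rs} y^r y^s$, a known rational function. Then $\pa_k F^2 = (\pa_k \eta)\, P + \eta\, (\pa_k P)$, so $\pa_k \log(F^2) - \pa_k\log\eta = \pa_k P / P$, i.e. $\pa_k\log\eta = \pa_k\log(F^2) - \pa_k\log P$. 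The second term on the right is rational in $y$. Hence $\pa_k\log\eta$ is rational in $y$ if and only if $\pa_k\log(F^2)$ is — equivalently (up to a rational summand) if and only if $\pa_k F^2$ is rational in $y$, after multiplying by $F^2$. Now I relate $\pa_k F^2$ to the $G^i$: contracting the first form of $\eqref{Fspray def}$, or better using the classical identity $2\, G^i\, y_i = y^k\, \pa_k F^2$ (which follows from Euler's theorem applied to $F^2$, since $y^k\dot\pa_k\pa_r F^2 = \pa_r F^2$ gives $g^{ir}(\tfrac12 y^k\dot\pa_k\pa_r F^2 - \tfrac12\pa_r F^2) = 0$ is not quite it — rather one uses $y_i G^i = \tfrac14 y^i g_{ir} g^{rs}(\dots)$; I will instead contract $\eqref{Fspray def}$ with $g_{ij}y^j$ to get $g_{ij}y^j G^i$ in terms of $\pa_k g_{rs}$ and then back-substitute $g_{ij} = \eta a_{ij}$). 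Concretely: $2\, g_{ij} y^j\, G^i = y^j\left(y^k y^s \pa_k g_{js} - 2 y^l y^s \pa_j g_{ls}\right)$, and since $g_{ij}y^j = \eta\, a_{ij} y^j$ is rational times $\eta$, while the right side is $y$-derivatives of $\eta\, a_{rs}$, one isolates a term $\bigl(\text{rational}\bigr)\cdot y^k\pa_k\eta$. Because $\eta\, a_{rs}$ is $0$-homogeneous, $y^k\pa_k(\eta a_{rs})$ is itself built from $\pa_k$-derivatives, and by Euler applied along $x$... — more carefully, I would simply observe that from the split identity for $G^i$, the combination $\sum_k c^k(x,y)\,\pa_k\log\eta$ appearing there (with $c^k$ explicit rational functions in $y$, not all zero as an $n$-tuple since the coefficient matrix is nondegenerate by positive-definiteness of $(a_{ij})$) equals $G^i$ minus a rational function; choosing $n$ values of $i$ and inverting the resulting linear system — whose coefficient matrix is rational in $y$ — recovers each $\pa_k\log\eta$ as a rational combination of the (assumed rational) $G^i$ and rational data.

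The main obstacle is exactly this last algebraic step: verifying that the linear system expressing $G^i$ in terms of the $n$ unknowns $\pa_k\log\eta$ has a coefficient matrix that is invertible over the field of rational functions in $y$, so that one can genuinely solve back. I expect this to come down to contracting with $y^i$ (or with $a_{ij}y^j$) to produce a single scalar equation $y^k\,\pa_k\log\eta = (\text{rational})$, which already pins down the radial derivative; then, since $\dot\pa_i\log\eta$ is rational by Proposition \ref{pa dot log eta rat} and $\pa_k$ commutes with $\dot\pa_i$, a homogeneity/integration argument along rays recovers all of $\pa_k\log\eta$ from its radial derivative and the known rational quantities $\dot\pa_i\pa_k\log\eta$. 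I would structure the write-up as: (i) derive the split formula for $G^i$; (ii) deduce the forward implication in one line; (iii) for the converse, contract with $a_{ij}y^j$ to get the scalar relation $2\,a_{ij}y^jG^i/\bigl(a_{rs}y^ry^s\bigr) = (\text{rational}) + y^k\pa_k\log\eta$, giving rationality of $y^k\pa_k\log\eta$; (iv) combine with Corollary \ref{commute} (rationality of $\dot\pa_i\pa_k\log\eta$) and Euler's relation $y^i\dot\pa_i(\pa_k\log\eta) = \pa_k\log\eta$ — wait, $\pa_k\log\eta$ need not be homogeneous of a fixed degree — so instead use that $\dot\pa_i\pa_k\log\eta$ rational implies, upon integrating along the fiber from a basepoint, that $\pa_k\log\eta(x,y) - \pa_k\log\eta(x,y_0)$ is rational in $y$; matching with the homogeneity degree of $\eta$ (forced by condition (c): $\eta$ has the same degree as $F^2$ divided by the degree of $P$, i.e. $\eta$ is $0$-homogeneous since $F^2/\eta=P$ is $2$-homogeneous and $F^2$ is $2$-homogeneous) shows $\pa_k\log\eta$ is $0$-homogeneous, hence determined by the scalar radial relation in (iii) plus the rational fiber-derivatives, and is therefore rational. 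This homogeneity bookkeeping is the fiddly part and where I would spend the most care.
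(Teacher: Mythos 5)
Your split formula and the forward implication coincide with the paper's: substituting $g_{ij}=\eta\,a_{ij}$ and $g^{ij}=a^{ij}/\eta$ into \eqref{Fspray def} yields exactly \eqref{Fspray eta}, i.e. $G^i=\tfrac12 M^{il}\,\pa_l\log(\eta)+(\text{rational})$ with $M^{il}:=y^iy^l-\tfrac12\,P\,a^{il}$ and $P:=a_{rs}y^ry^s$, and rationality of the $\pa_l\log(\eta)$ then gives rationality of $G^i$. For the converse you correctly identify the decisive point --- solvability over the field of rational functions of the linear system $\tfrac12 M^{il}\,\pa_l\log(\eta)=G^i-(\text{rational})$ --- but you explicitly leave the invertibility of $M$ unverified and call it ``the main obstacle''. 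That check is in fact short and is the entire content of the converse (the paper itself glosses it with ``follows directly''): one verifies
$M^{il}\left(-\tfrac{2}{P}\,a_{lj}+\tfrac{4}{P^{2}}\,(a_{lr}y^r)(a_{js}y^s)\right)=\delta^i_j$,
using $a^{il}a_{lj}=\delta^i_j$ and $P>0$ on the slit tangent bundle; the displayed inverse is manifestly rational in $y$, so each $\pa_l\log(\eta)$ is a rational combination of the (assumed rational) $G^i$ and rational data.

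The fallback argument you substitute for this step does not work. Extracting rationality of the radial derivative $y^k\pa_k\log(\eta)$ by contraction is fine, but the reconstruction of $\pa_k\log(\eta)$ from its fiber derivatives fails twice over. First, ``integrating along the fiber'' a rational function need not return a rational function (from $\dot{\pa}_1 f=1/y^1$ one gets $f=\log y^1+\dots$), so rationality of $\dot{\pa}_i\pa_k\log(\eta)$ (Corollary \ref{commute}) does not make $\pa_k\log(\eta)(x,y)-\pa_k\log(\eta)(x,y_0)$ rational. Second, the homogeneity bookkeeping is wrong: condition (c) constrains only the product $\eta\,a_{ij}$, and in the paper's own examples $\eta$ is homogeneous of nonzero degree (degree $2-2m$ for the $m$-th root metric), so $P=F^2/\eta$ is not $2$-homogeneous; and even granting that $\eta$ is homogeneous of some degree, $\pa_k\log(\eta)$ is then $0$-homogeneous, so Euler's identity reads $y^i\dot{\pa}_i\left(\pa_k\log(\eta)\right)=0$ and recovers nothing. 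The converse as you finally structure it therefore has a genuine gap; the matrix inversion you set aside is the step that must be done, and it can be done explicitly as above.
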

\begin{proof}
Plug expression \eqref{gdef} into formula \eqref{Fspray def}, we obtain
\be \label{Fspray eta}
G^i = \frac{1}{2} \, \left( y^i y^l -\frac{1}{2}\, y^r y^s \, a_{rs} a^{il} \right)\pa_l\log(\eta) +  \frac{1}{2} \,y^k\, y^s \, a^{li} \left( \pa_{k} a_{ls} - \frac{1}{2} \, \pa_{l}a_{ks} \right).
\ee
Assume that $\pa_k\log\left(\eta(x,y)\right)$ are rational functions in $y$. The proof is completed by noting that the multiplication of rational functions is rational and  the partial derivative  $\pa_{l}$ the rational functions $a_{ij}(x,y)$ with respect to $x$ remains rational in $y$. The converse follows directly from \eqref{Fspray eta}.
\end{proof}

\begin{prop}\label{N eta rat}
Let $(M,F)$ be an AR-Finsler manifold. Then, the coefficients of Barthel connection $N^i_{j}$ are given by
\begin{eqnarray*}
N^j_{i}&=& \frac{1}{2} \left[ y^j\, \pa_i\log(\eta)  + \delta^{j}_{i}\,y^k\, \pa_k\log(\eta) \right] \\
&&- \frac{1}{4}\, y^r\, \left( 2\, a_{ir}\,a^{jl} + y^s [a^{jl} \dot{\pa}_{i} a_{rs} + a_{rs}\,\dot{\pa}_{i} a^{jl}] \right)\, \pa_l\log(\eta)  \\
&&
+\frac{1}{2} \left( y^j \, y^l - \frac{1}{2} \, y^r \, y^s\, a_{rs}\, a^{jl} \right) \dot{\pa}_{i} {\pa}_{l}\log(\eta) \\
&&
+  \frac{1}{2}  \dot{\pa}_{i} \left(\,y^k\, y^s \, a^{lj} \left[ \pa_{k} a_{ls} - \frac{1}{2} \, \pa_{l}a_{ks} \right]\right).
\end{eqnarray*}
Consequently, $N^i_{j}(x,y)$ are rational functions in $y$ if and only if the functions  $\pa_k\log\left(\eta(x,y)\right)$ are  rational  in $y$.
\end{prop}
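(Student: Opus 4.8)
The plan is to obtain the stated formula for $N^j_i$ by differentiating the spray formula \eqref{Fspray eta} with respect to $y^i$, since by definition $N^j_i = \dot{\pa}_i G^j$. First I would recall \eqref{Fspray eta}, which writes $G^j$ as the sum of two pieces: the first, $\frac{1}{2}\left( y^j y^l - \frac12 y^r y^s a_{rs} a^{jl}\right)\pa_l\log(\eta)$, carries all the dependence on $\pa_l\log(\eta)$; the second, $\frac{1}{2}\,y^k y^s a^{lj}\left(\pa_k a_{ls} - \frac12 \pa_l a_{ks}\right)$, is built only from the $a_{ij}$'s and their $x$-derivatives. Applying $\dot{\pa}_i$ to the second piece directly gives the last line of the claimed expression for $N^j_i$, with nothing to simplify.

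The bulk of the computation is applying $\dot{\pa}_i$ to the first piece via the product rule. Differentiating the coefficient $\frac{1}{2}\left(y^j y^l - \frac12 y^r y^s a_{rs} a^{jl}\right)$ and keeping $\pa_l\log(\eta)$ fixed, I use $\dot{\pa}_i y^j = \delta^j_i$, $\dot{\pa}_i y^l = \delta^l_i$, and the Leibniz rule on $y^r y^s a_{rs} a^{jl}$. The terms $\frac12(\delta^j_i y^l + y^j \delta^l_i)\pa_l\log(\eta)$ contract against $\pa_l\log(\eta)$ to produce $\frac12\left[ y^j\pa_i\log(\eta) + \delta^j_i\, y^k\pa_k\log(\eta)\right]$, matching the first bracket in the statement. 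Differentiating $-\frac14 y^r y^s a_{rs} a^{jl}$ gives $-\frac14\big( 2 y^r a_{ir} a^{jl} + y^r y^s [a^{jl}\dot{\pa}_i a_{rs} + a_{rs}\dot{\pa}_i a^{jl}]\big)\pa_l\log(\eta)$ after relabeling the doubled index (using symmetry of $a_{rs}$ to combine the $\dot{\pa}_i y^r$ and $\dot{\pa}_i y^s$ contributions into the single $2 y^r a_{ir} a^{jl}$ term), which is exactly the second line of the claim. Finally, differentiating $\pa_l\log(\eta)$ itself and leaving the coefficient alone yields $\frac12\left(y^j y^l - \frac12 y^r y^s a_{rs} a^{jl}\right)\dot{\pa}_i\pa_l\log(\eta)$, the third line.

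For the rationality consequence, I argue as in Proposition \ref{G eta rat}. The functions $a_{ij}$, $a^{ij}$, their $\dot{\pa}$-derivatives and their $x$-derivatives are all rational in $y$, so the entire last line of the $N^j_i$ formula is rational regardless of $\eta$. If the $\pa_k\log(\eta)$ are rational in $y$, then so are $\dot{\pa}_i\pa_l\log(\eta)$ (the derivative in $y$ of a rational function is rational), and every remaining term is a product of rationals, hence $N^j_i$ is rational. Conversely, once we know the last line is always rational, the collection of the other terms equals $N^j_i$ minus a rational function, so it is rational iff $N^j_i$ is; and from that expression one recovers $\pa_k\log(\eta)$ rationality, most cleanly by contracting appropriately — for instance, taking the trace in $j,i$ and using the fact that $\dot{\pa}$-derivatives preserve rationality, or simply by observing that the structure is the same as in \eqref{Fspray eta}: since $N^j_i = \dot{\pa}_i G^j$ and $G^j$ determines and is determined by the $\pa_l\log(\eta)$ terms up to rational corrections, rationality of $N^j_i$ forces rationality of $G^j$ (integrating the homogeneity relation $y^i N^j_i = 2G^j$), and then Proposition \ref{G eta rat} finishes the argument.

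The main obstacle is purely bookkeeping: correctly handling the repeated-index relabelings when the product rule hits $y^r y^s a_{rs} a^{jl}$, making sure the two symmetric contributions from $\dot{\pa}_i y^r$ and $\dot{\pa}_i y^s$ are correctly merged into the single $2\,a_{ir}a^{jl}y^r$ term, and keeping the $\dot{\pa}_i a^{jl}$ term (which could alternatively be rewritten via $\dot{\pa}_i a^{jl} = -a^{jp}a^{lq}\dot{\pa}_i a_{pq}$, though leaving it as is suffices). No conceptual difficulty arises; the homogeneity identity $y^i N^j_i = 2 G^j$ is the only non-mechanical input, used to close the converse direction cleanly.
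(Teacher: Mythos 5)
Your proposal is correct and follows essentially the same route as the paper: differentiate formula \eqref{Fspray eta} with respect to $y^i$ to obtain the displayed expression, then deduce the rationality equivalence from Corollary \ref{commute} (which makes the $\dot{\pa}_i\pa_l\log(\eta)$ terms and the last line rational unconditionally) together with Proposition \ref{G eta rat}. Your use of the Euler relation $y^i N^j_i = 2G^j$ to close the converse is exactly the natural way to make precise the paper's terse appeal to Proposition \ref{G eta rat}, so there is nothing to add.
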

\begin{proof}
The expression of $N^j_i (x,y)$ immediately follows by differentiating formula \eqref{Fspray eta} with respect to $y^j$. The necessary and sufficient condition for  $N^j_i (x,y)$ to be rational functions in $y$ is a direct consequence of Corollary \ref{commute} and Proposition \ref{G eta rat} along with using the fact that the multiplication of rational functions is rational and the partial derivative of a rational function in $y$ with respect to $x$ remains rational in $y$. 
\end{proof}
\begin{cor}
Let $(M,F)$ be an $n$-dimensional AR-Finsler  manifold, $n \geq 2$. Then, the functions $\delta_{k}\log\left(\eta(x,y)\right)$ are rational in $y$ if and only if  $\pa_i\log\left(\eta(x,y)\right)$ are rational functions in $y$.
\end{cor}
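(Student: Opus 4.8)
The plan is to recall the definition $\delta_k = \partial_k - N^r_k\,\dot\partial_r$ of the horizontal basis vector fields, and to apply it directly to $\log(\eta)$, so that
$$
\delta_k \log(\eta) = \partial_k \log(\eta) - N^r_k\,\dot\partial_r \log(\eta).
$$
This is the structural identity that links the three objects $\delta_k\log\eta$, $\partial_k\log\eta$, and $\dot\partial_r\log\eta$ appearing in the statement. From here the argument is a bookkeeping exercise in "which of these is already known to be rational."

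First I would dispatch the forward implication. Assume $\partial_k\log(\eta)$ is rational in $y$. By Proposition \ref{N eta rat}, the hypothesis that $\partial_k\log(\eta)$ is rational forces $N^r_k(x,y)$ to be rational in $y$. By Proposition \ref{pa dot log eta rat}, $\dot\partial_r\log(\eta)$ is always rational in $y$ for an AR-Finsler manifold (no hypothesis on $\eta$ needed). Feeding these three facts into the identity above — and using that sums and products of rational functions in $y$ are again rational — gives that $\delta_k\log(\eta)$ is rational in $y$.

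For the converse, assume $\delta_k\log(\eta)$ is rational in $y$. Rewriting the identity as $\partial_k\log(\eta) = \delta_k\log(\eta) + N^r_k\,\dot\partial_r\log(\eta)$, I again use Proposition \ref{pa dot log eta rat} for the rationality of $\dot\partial_r\log(\eta)$; the remaining issue is the rationality of $N^r_k$. Here is the one subtle point, and the main obstacle: $N^r_k$ is itself built from $\partial_l\log(\eta)$ (look at the explicit formula in Proposition \ref{N eta rat}), so I cannot simply "plug in" $N^r_k$ as known. The clean way around this is to invoke the alternative formula $\delta_k\log(\eta) = -\tfrac1n\,a^{ij}a_{ij|k}$ together with the definition of the Berwald horizontal derivative of $a_{ij}$, and then isolate $\partial_k\log(\eta)$; alternatively, and more in the spirit of the paper, one observes that $\delta_k\log(\eta)$ rational plus $\dot\partial_r\log(\eta)$ rational, combined with the structure of $N^r_k$ as an expression that is linear in $\partial_l\log(\eta)$ plus rational terms (from Proposition \ref{N eta rat}), yields a linear relation for $\partial_l\log(\eta)$ over the field of rational functions in $y$; solving it shows $\partial_l\log(\eta)$ is rational. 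Concretely one substitutes the Proposition \ref{N eta rat} expression into $\partial_k\log(\eta) = \delta_k\log(\eta) + N^r_k\,\dot\partial_r\log(\eta)$, collects all occurrences of $\partial_\bullet\log(\eta)$ on the left, and checks the resulting coefficient matrix is invertible over the rationals (it is, being a perturbation of the identity by terms involving $y$-rational data); the details are a routine computation analogous to the proof of Proposition \ref{N eta rat}.

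I expect the only genuine care needed is in that last step — making the circularity between $N^r_k$ and $\partial_l\log(\eta)$ harmless — and I would phrase the write-up so as to use the identity $\delta_k\log(\eta) = -\tfrac1n a^{ij}a_{ij|k}$ (Proposition before Theorem \ref{Riem}) to avoid re-deriving $N^r_k$: expanding $a_{ij|k} = \delta_k a_{ij} - G^s_{ki}a_{sj} - G^s_{kj}a_{is}$ and then $\delta_k a_{ij} = \partial_k a_{ij} - N^r_k \dot\partial_r a_{ij}$ reduces everything, via Proposition \ref{G eta rat} and Proposition \ref{N eta rat}, to the equivalence already proved there. Either route closes the corollary.
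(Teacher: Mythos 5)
Your decomposition $\delta_k\log\eta=\partial_k\log\eta-N^r_k\,\dot\partial_r\log\eta$ is exactly the identity on which the paper's one-line proof rests, and your forward direction (rationality of $\partial_k\log\eta$ forces rationality of $N^r_k$, while $\dot\partial_r\log\eta$ is always rational, hence $\delta_k\log\eta$ is rational) coincides with the paper's argument. You are also right that the converse is the delicate point: the paper dispatches it by citing the same proposition on $N^i_j$, but that proposition only yields rationality of $N^r_k$ \emph{under the hypothesis} that the $\partial_l\log\eta$ are rational, which is precisely what the converse must establish. So you have correctly identified a circularity that the paper passes over in silence.

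However, neither of your two proposed repairs actually closes it. The detour through $\delta_k\log\eta=-\tfrac1n a^{ij}a_{ij|k}$ reintroduces the same circularity, since $a_{ij|k}$ involves the Berwald coefficients $G^s_{ij}$, whose rationality is again known only modulo the rationality of $\partial_l\log\eta$. The linear-system route is the right idea: writing $G^i=\tfrac12P^{il}\partial_l\log\eta+(\text{rational})$ with $P^{il}=y^iy^l-\tfrac12y^ry^sa_{rs}a^{il}$ and differentiating, one gets $\delta_k\log\eta=B^l_k\,\partial_l\log\eta+(\text{rational})$ with $B^l_k=\delta^l_k-\tfrac12(\dot\partial_r\log\eta)\,\dot\partial_kP^{rl}$, all coefficients rational. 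But your assertion that $B$ is invertible ``being a perturbation of the identity'' is unsupported: the correction term is not small in any sense (for instance, if $\eta$ is positively homogeneous of degree $d$ in $y$, the Euler identity already contributes a factor $\bigl(1-\tfrac d2\bigr)\delta^l_k$ plus further nontrivial terms, and nothing rules out $\det B=0$). Until $\det B\neq0$ is actually established, the converse implication is not proved. To be fair, the paper's own proof contains the identical unaddressed gap; the difference is that your write-up explicitly promises to repair it and then asserts the key invertibility without argument.
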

\begin{proof}
Since $\delta_{k}\log(\eta) = \pa_{k}\log(\eta)- N^{r}_{k} \, \dot{\pa}_{r} \log(\eta)$, thus, the proof is completed by making use of Propositions \ref{pa dot log eta rat} and  \ref{N eta rat}  together with using the fact the multiplication of rational functions is rational.
\end{proof}
A direct consequence of Propositions \ref{G eta rat} and \ref{N eta rat} is the following result. 
\begin{cor}\label{Berwald}
Let $(M,F)$ be an AR-Finsler manifold. Then, the Berwald connection $G^i _{jk}$ and   Berwald curvature $G^i _{jkl}$ are rational functions in $y$ if and only if  the functions $\pa_i\log \left(\eta(x,y)\right)$ are rational in $y$. 
\end{cor}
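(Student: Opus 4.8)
The plan is to derive the corollary from Propositions~\ref{G eta rat} and~\ref{N eta rat} via the identities $G^i_{jk}=\dot{\partial}_k\dot{\partial}_jG^i$ and $G^i_{jkl}=\dot{\partial}_l\dot{\partial}_k\dot{\partial}_jG^i$, together with two elementary facts: first, that the fibre derivative $\dot{\partial}_m$ of a function rational in $y$ is again rational in $y$ (quotient rule); and second, Euler's theorem, which gives $N^i_j=y^k\dot{\partial}_kN^i_j=y^kG^i_{jk}$ because $G^i$ is positively homogeneous of degree two in $y$, so that $N^i_j=\dot{\partial}_jG^i$ is homogeneous of degree one.

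For the ``if'' direction I would assume that the functions $\partial_i\log(\eta(x,y))$ are rational in $y$. Proposition~\ref{G eta rat} then yields that the spray coefficients $G^i(x,y)$ are rational in $y$, and differentiating them twice, respectively three times, with respect to the fibre coordinates (and invoking the first elementary fact above) shows that $G^i_{jk}(x,y)$ and $G^i_{jkl}(x,y)$ are rational in $y$.

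For the ``only if'' direction I would start from the rationality of $G^i_{jk}(x,y)$ (which is already part of the hypothesis; the rationality of $G^i_{jkl}$ will not be needed). By the Euler identity $N^i_j=y^kG^i_{jk}$, the coefficients $N^i_j(x,y)$ are a finite sum of products of rational functions, hence rational in $y$; Proposition~\ref{N eta rat} then returns the rationality of $\partial_i\log(\eta(x,y))$, which completes the argument.

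I expect the only non-formal point to be the reconstruction of the nonlinear connection $N^i_j$ from the Berwald coefficients in the converse: this is precisely where homogeneity of the spray is indispensable, since rationality of $G^i_{jk}=\dot{\partial}_kN^i_j$ on its own would pin down $N^i_j$ only up to an additive term annihilated by all $\dot{\partial}_k$. Everything else reduces to the two cited propositions and the stability of rationality in $y$ under $\dot{\partial}_m$ and $\partial_m$.
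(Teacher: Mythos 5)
Your proof is correct and follows the same route the paper takes: the paper presents this corollary as a ``direct consequence'' of Propositions~\ref{G eta rat} and~\ref{N eta rat} with no further detail, and your argument is precisely that deduction written out. The one substantive point you add --- recovering $N^i_j=y^kG^i_{jk}$ from the Berwald coefficients via Euler's theorem (degree-one homogeneity of $N^i_j$) so that Proposition~\ref{N eta rat} can be applied in the converse direction --- is exactly the step the paper leaves implicit, and you handle it correctly.
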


\begin{prop}
Let $(M,F,dV)$ be an AR-Finsler manifold equipped with an arbitrary volume form. Then, the $S$-curvature is given by
\begin{eqnarray*}
S &=& \frac{1}{2} \left( n\, y^{l}  -\frac{1}{2} y^{r}\, y^{s}\, \{ a^{ml}\, \dot{\pa}_{m}a_{rs} + a_{rs} \dot{\pa}_{m} a^{ml}  \} \right)\,\pa_{l}\log(\eta) 
 \\ &&
+\frac{1}{2}  \left( y^{m} y^{l} -  \frac{1}{2}  y^{r} y^{s}  a_{rs} a^{ml}\right)\dot{\pa}_{m}\pa_{l}\log(\eta)+ \frac{1}{2} y^{k} a^{ml}\left( \pa_{m} a_{lk} + \pa_{k} a_{lm} -\frac{1}{4} \pa_{l} a_{mk}\right) \\ &&
+ \frac{1}{2} y^{k} y^{s} \left[ \dot{\pa}_{m} a^{ml} ( \pa_{k} a_{ls} -\frac{1}{2} \pa_{l} a_{ks}) + a^{ml} \left( \dot{\pa}_{m}\pa_{k} a_{ls}- \frac{1}{2} \dot{\pa}_{m}\pa_{l} a_{ks}\right) \right] -  y^{m}\pa_{m} \log( \sigma(x)).
\end{eqnarray*}
Thus, $S$-curvature is rational function in $y$ if and only if  the functions $\pa_i\log\left(\eta(x,y)\right)$ are rational in $y$. Consequently, the $E$-curvature is rational in $y$ if and only if the functions $\pa_i\log\left(\eta(x,y)\right)$ are rational  in $y$.
\end{prop}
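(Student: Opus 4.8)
The plan is to compute the $S$-curvature from its coordinate expression \eqref{eqn8}, namely $S = N^m_m - y^m \pa_m \log(\sigma(x))$, by taking the trace $j=i$ in the formula for $N^j_i$ given in Proposition \ref{N eta rat} and then collecting terms. First I would set $j=i$ and sum in the four lines of the displayed formula for $N^j_i$: the first bracket contributes $\tfrac12[y^i\pa_i\log(\eta) + n\, y^k\pa_k\log(\eta)]$ after using $\delta^i_i = n$; the second line contributes $-\tfrac14 y^r(2\,a_{ir}a^{il} + y^s[a^{il}\dot{\pa}_i a_{rs} + a_{rs}\dot{\pa}_i a^{il}])\pa_l\log(\eta)$, which simplifies using $a^{il}a_{ir} = \delta^l_r$; the third line becomes $\tfrac12(y^iy^l - \tfrac12 y^ry^s a_{rs}a^{il})\dot{\pa}_i\pa_l\log(\eta)$; and the fourth line is $\tfrac12\,\dot{\pa}_i(y^ky^s a^{li}[\pa_k a_{ls} - \tfrac12\pa_l a_{ks}])$, which I would expand via the product rule in $y$, being careful that $\dot{\pa}_i$ hits $y^k$, $y^s$, and $a^{li}$ separately. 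Combining the $y^i\pa_i\log(\eta)$ piece from line one with part of line two should yield the stated coefficient $\tfrac12(n\,y^l - \tfrac12 y^ry^s\{a^{ml}\dot{\pa}_m a_{rs} + a_{rs}\dot{\pa}_m a^{ml}\})\pa_l\log(\eta)$, after relabeling dummy indices; the remaining terms, together with the $-y^m\pa_m\log(\sigma)$ from \eqref{eqn8}, should reproduce the rest of the claimed formula. This is essentially a bookkeeping exercise in index relabeling and the Leibniz rule.

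Once the explicit formula is in hand, the rationality claim follows structurally. Every $a_{ij}(x,y)$ is rational in $y$ by hypothesis (b), hence so is $a^{ij}$ (cofactors over determinant), and the operators $\dot{\pa}_m$ and $\pa_k$ preserve rationality in $y$ — the former because differentiating a rational function in $y$ with respect to $y$ stays rational, the latter because differentiating a rational-in-$y$ function with respect to $x$ stays rational in $y$ (this is exactly the observation used before Corollary \ref{commute}). Therefore, inspecting the formula for $S$, every summand except those involving $\pa_l\log(\eta)$ or $\dot{\pa}_m\pa_l\log(\eta)$ is automatically rational in $y$; and the term $y^m\pa_m\log(\sigma(x))$ is polynomial (linear) in $y$, hence rational. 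So $S$ is rational in $y$ if and only if the two groups of terms carrying $\pa_l\log(\eta)$ and $\dot{\pa}_m\pa_l\log(\eta)$ are jointly rational. By Corollary \ref{commute}, $\dot{\pa}_m\pa_l\log(\eta)$ is rational in $y$ whenever $\pa_l\log(\eta)$ is (since $\pa_l\log(\eta)$ rational implies $\dot{\pa}_m\pa_l\log(\eta)$ rational), so one direction is immediate: if all $\pa_i\log(\eta)$ are rational in $y$ then $S$ is rational in $y$. For the converse, the point is that the coefficient of $\pa_l\log(\eta)$, namely $\tfrac12(n\,y^l - \tfrac12 y^ry^s\{a^{ml}\dot{\pa}_m a_{rs} + a_{rs}\dot{\pa}_m a^{ml}\})$, together with $\dot{\pa}_m\pa_l\log(\eta)$'s coefficient, must force $\pa_l\log(\eta)$ itself to be rational; here I would argue as in Propositions \ref{G eta rat} and \ref{N eta rat} — indeed, since $S = N^m_m - y^m\pa_m\log\sigma$ and $y^m\pa_m\log\sigma$ is manifestly rational in $y$, rationality of $S$ is equivalent to rationality of $N^m_m$, and by the proof of Proposition \ref{N eta rat} the trace $N^m_m$ determines (and is determined by, modulo manifestly rational terms) the $\pa_i\log(\eta)$; alternatively one invokes that $2G^i = 2N^i_j y^j/\ldots$ — more cleanly, from $y^j N^i_j = 2G^i$ by homogeneity, rationality of all $N^i_j$ gives rationality of $G^i$, whence Proposition \ref{G eta rat} gives rationality of $\pa_i\log(\eta)$; but to go from $N^m_m$ alone one re-runs the contraction argument directly on the displayed formula.

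The main obstacle I anticipate is this converse direction: extracting rationality of the individual $\pa_l\log(\eta)$ from rationality of the single scalar $S = N^m_m - y^m\pa_m\log\sigma$. Unlike the full tensor $N^i_j$, the trace is one equation, and a priori cancellation among the $\pa_l\log(\eta)$-terms and the $\dot{\pa}_m\pa_l\log(\eta)$-terms could occur. The safest route is to not prove the converse from $S$ alone but to chain through earlier results: by Proposition \ref{G eta rat}, $G^i$ rational in $y$ $\iff$ $\pa_k\log(\eta)$ rational in $y$; and $G^i$ is recovered from the $S$-curvature circle of ideas only if one has enough components — so instead I would phrase the converse as: if $\pa_i\log(\eta)$ are \emph{not} all rational, then neither line-one coefficient $\tfrac{n}{2}y^l$ (which is a nonzero polynomial for $n\geq 2$) can be cancelled by the other, manifestly rational, coefficients of $\pa_l\log(\eta)$ and the $\dot{\pa}_m\pa_l$-term, so $S$ fails to be rational; making ``cannot be cancelled'' precise uses that the offending term $\tfrac{n}{2}y^l\pa_l\log(\eta)$ has a different structure (no $\dot{\pa}$ and a clean polynomial prefactor) than the correction terms. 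Finally, the $E$-curvature statement is a one-line corollary: $E_{ij} = \tfrac12\dot{\pa}_i\dot{\pa}_j S$ by \eqref{E-cur}, and $\dot{\pa}_i\dot{\pa}_j$ preserves rationality in $y$ in both directions up to the same $\pa_i\log(\eta)$ obstruction (using Corollary \ref{commute} to handle the $\log(\eta)$-derivatives), so $E$ rational $\iff$ $S$ rational $\iff$ $\pa_i\log(\eta)$ rational.
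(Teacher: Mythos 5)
Your proposal is essentially the paper's own proof, which consists of the single sentence ``It follows from Proposition~\ref{N eta rat} together with formulae \eqref{eqn8} and \eqref{E-cur} along with using the fact that the multiplication of rational functions is rational'' --- that is, set $j=i$ in the displayed formula for $N^j_i$, sum, substitute into \eqref{eqn8}, and read off rationality; the $E$-curvature claim then follows from \eqref{E-cur} exactly as you say. Your derivation of the explicit formula and the forward implication ($\pa_i\log\eta$ rational $\Rightarrow$ $S$ rational, via Corollary~\ref{commute} and closure of rational functions under products and $\pa_k$, $\dot{\pa}_k$) are correct and complete. The one place you rightly hesitate --- the converse, i.e.\ extracting rationality of each $\pa_l\log\eta$ from rationality of the \emph{single} scalar $N^m_m - y^m\pa_m\log\sigma$, where a priori cancellation between the $\pa_l\log\eta$-terms and the $\dot{\pa}_m\pa_l\log\eta$-terms could occur --- is precisely the point the paper does not argue at all; it simply asserts the equivalence. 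Your two suggested repairs do not close this by themselves: recovering $G^i$ via $y^jN^i_j=2G^i$ needs all components $N^i_j$, not just the trace, and the ``cannot be cancelled'' argument is left heuristic. So the verdict is: same approach as the paper, with the forward direction fully justified, and the converse resting on an unproved non-cancellation claim that the paper shares rather than resolves. (One useful observation you could add: since $\pa_l\log\eta$ is positively homogeneous of degree $0$ in $y$, Euler's theorem gives $y^m\dot{\pa}_m\pa_l\log\eta=0$, which kills part of the second group of terms and would simplify any attempt to make the converse rigorous.)
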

\begin{proof}
It follows from Propositions \ref{N eta rat} together with formulae \eqref{eqn8} and \eqref{E-cur} along with using the fact that the multiplication of rational functions is rational.  
\end{proof}
It is worth mentioning that some Finsler structures $F$ are rational in $y$ while others are not. For example, Kropina metric is rational in $y$, generalized Kropina metric $F(x,y)=\frac{\alpha^{k+1}}{\beta^k}$, where $k$ is an even number, is not rational function in $y$ (see section 4, for details). The following result  holds only when a Finsler metric $F$ is not rational function in $y$.
\begin{thm}
Let $(M,F, dV)$ be an AR-Finsler manifold equipped with an arbitrary volume form such that $F(x,y)$ is not rational in $y$ and the functions $\pa_i\log\left(\eta(x,y)\right)$ are rational in $y$. If $(M,F,dV)$ has isotropic $S$-curvature, then $(M,F)$ has vanishing $S$-curvature. 
\end{thm}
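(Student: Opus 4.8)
The plan is to play the rationality of the $S$-curvature (just established in the preceding proposition) against the assumed irrationality of $F$. First I would unwind the hypothesis of isotropic $S$-curvature: by definition there is a smooth function $c\in C^{\infty}(M)$ such that
\[
S(x,y) = (n+1)\,c(x)\,F(x,y),
\]
and the goal is reduced to proving that $c\equiv 0$ on $M$, since this at once gives $S\equiv 0$.

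Next I would invoke the previous proposition. Because the functions $\pa_i\log(\eta(x,y))$ are rational in $y$ by assumption, that proposition guarantees that the $S$-curvature $S(x,y)$ is a rational function in $y$ (the explicit formula for $S$ is used only through this conclusion, so no further computation is needed). Hence the right-hand side $(n+1)\,c(x)\,F(x,y)$ is rational in $y$ as well.

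Now I would argue by contradiction. Suppose $c$ does not vanish identically; then there is a nonempty open set $U\subseteq M$ on which $c(x)\neq 0$. For each fixed $x\in U$ the factor $(n+1)c(x)$ is a nonzero scalar not depending on $y$, so
\[
F(x,y) = \frac{1}{(n+1)\,c(x)}\,S(x,y)
\]
is a rational function of $y$ for every $x\in U$. This contradicts the standing hypothesis that $F$ is not rational in $y$. Therefore $c\equiv 0$ on $M$, and consequently $S=(n+1)\,c\,F\equiv 0$, i.e.\ $(M,F)$ has vanishing $S$-curvature.

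The argument is essentially immediate once the rationality of $S$ is in hand; the only point requiring a little care is the precise reading of ``$F$ is not rational in $y$''. What the proof actually uses is that $F(x,\cdot)$ fails to be rational on any nonempty open set of base points (equivalently, at a generic $x$), which is exactly the situation for the irrational examples treated in Section~4, such as the generalized Kropina metric $F=\alpha^{k+1}/\beta^{k}$ with $k$ even. I therefore expect this bookkeeping — isolating the open set where $c\neq 0$ and noting that the irrationality hypothesis is genuinely pointwise/generic rather than at a single fibre — to be the only mild obstacle; the rest is formal.
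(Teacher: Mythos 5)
Your proposal is correct and follows essentially the same route as the paper: invoke the preceding proposition to conclude that $S$ is rational in $y$, then observe that the isotropy relation $S=(n+1)\,c(x)\,F$ forces $c\equiv 0$ since $F$ is not rational in $y$. Your extra care in isolating the open set where $c\neq 0$ and dividing out the scalar is a welcome tightening of the paper's one-line final step, but it is the same argument.
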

\begin{proof}
A Finsler metric $F$ is said to have isotopic $S$-curvature if there exists a function $A$ in $M$ such that \[S(x,y)= (n+1)\, A(x)\, F(x,y).\]
Assume that the functions $\pa_i\log\left(\eta(x,y)\right)$ are rational in $y$. Then, by Proposition \ref{main scalar rat}, the $S$-curvature is  rational in $y$, on the other hand $F(x,y)$ is not rational in $y$ by hypothesis. Therefore, $S$ identically vanishes.
\end{proof}
 \begin{prop}
 Let $(M,F)$ be an AR-Finsler manifold. Then, the Douglas curvature components $D^i _{jkl}$ are rational functions in $y$ if and only if the functions $\pa_i\log\left(\eta(x,y)\right)$ are rational in $y$.
 \end{prop}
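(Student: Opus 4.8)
The plan is to derive an explicit formula for the Douglas curvature components $D^i_{jkl}$ that isolates the dependence on $\eta$, and then observe that every $\eta$-dependent piece is rational in $y$ precisely when $\pa_i\log\left(\eta(x,y)\right)$ are rational in $y$, while every remaining piece is automatically rational because it is built from the rational functions $a_{ij}(x,y)$, their $x$-derivatives, and their $y$-derivatives. First I would recall the definition $D^i_{jkl}=G^i_{jkl}-\frac{1}{n+1}\dot{\pa}_j\dot{\pa}_k\dot{\pa}_l\left(y^i N^s_s\right)$, and note that $N^s_s$ is (up to the volume term, which does not appear here since no $dV$ is fixed) tied to the trace appearing in the $S$-curvature computation; in any case $N^s_s=\dot{\pa}_s G^s$ is obtained from $G^s$ by a single $y$-differentiation. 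By Proposition \ref{G eta rat} the spray coefficients $G^i$ are rational in $y$ if and only if $\pa_k\log\left(\eta(x,y)\right)$ are rational in $y$, and since $\dot{\pa}$-derivatives and multiplication by the polynomial $y^i$ preserve rationality, $\dot{\pa}_j\dot{\pa}_k\dot{\pa}_l\left(y^i N^s_s\right)$ is rational iff $\pa_k\log(\eta)$ is rational.

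The next step is the term $G^i_{jkl}=\dot{\pa}_l\dot{\pa}_k\dot{\pa}_j G^i$, the Berwald curvature. Here I would simply invoke Corollary \ref{Berwald}, which already states that $G^i_{jkl}$ is rational in $y$ if and only if $\pa_i\log\left(\eta(x,y)\right)$ are rational in $y$. Combining the two observations, the difference $D^i_{jkl}$ is a sum of two quantities, each of which is rational in $y$ exactly under the hypothesis on $\pa_i\log(\eta)$; hence $D^i_{jkl}$ is rational in $y$ if $\pa_i\log\left(\eta(x,y)\right)$ are rational in $y$. For the converse I would argue that the potentially non-rational parts of $G^i_{jkl}$ and of $\frac{1}{n+1}\dot{\pa}_j\dot{\pa}_k\dot{\pa}_l\left(y^i N^s_s\right)$ cannot cancel in general: more carefully, one shows that if $D^i_{jkl}$ is rational then, using again the explicit forms from Propositions \ref{G eta rat} and \ref{N eta rat} (where the $\eta$-contributions enter through $\pa_l\log(\eta)$ and $\dot{\pa}_i\pa_l\log(\eta)$ and their further $\dot\pa$-derivatives), rationality of $D^i_{jkl}$ forces rationality of these $\eta$-terms, and the proof of Corollary \ref{Berwald} already identifies exactly which combinations must be rational.

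The main obstacle, and the only place the argument needs care, is the converse direction: a priori the irrational $\eta$-parts of the Berwald curvature and of the $\frac{1}{n+1}\dot\pa_j\dot\pa_k\dot\pa_l(y^iN^s_s)$ correction could conspire to cancel, so that $D^i_{jkl}$ is rational without $\pa_i\log(\eta)$ being rational. To rule this out I would contract or specialise indices so as to extract the $E$-curvature-type trace cleanly: note that $\dot\pa_j\dot\pa_k\dot\pa_l(y^iN^s_s)$, upon contracting $i$ with one lower index and using Euler's relation for the $0$-homogeneous object $N^s_s$, reproduces (a multiple of) $\dot\pa\dot\pa\dot\pa N^s_s$, which is the third $y$-derivative of $2\,S$ (with $\sigma$ absent), and the proof of the preceding proposition shows this is rational iff $\pa_i\log(\eta)$ is rational; meanwhile the trace of $G^i_{jkl}$ over $i$ and $l$ is $\dot\pa_k\dot\pa_j N^s_s$, again rational iff $\pa_i\log(\eta)$ is. Comparing the two contracted identities shows that if $D^i_{jkl}$ is rational then $\dot\pa_k\dot\pa_j N^s_s$ and $\dot\pa_j\dot\pa_k\dot\pa_l N^s_s$ are rational, whence $\pa_i\log(\eta)$ is rational. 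Thus the statement follows by combining these reductions with Corollary \ref{Berwald} and Propositions \ref{G eta rat} and \ref{N eta rat}.
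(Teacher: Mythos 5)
Your ``if'' direction is exactly the paper's proof: write $D^i_{jkl}=G^i_{jkl}-\tfrac{1}{n+1}\dot{\pa}_j\dot{\pa}_k\dot{\pa}_l\left(y^i N^s_s\right)$ as in \eqref{Douglas}, invoke Corollary \ref{Berwald} for the first term and Proposition \ref{N eta rat} (plus closure of rational functions under $\dot{\pa}$ and multiplication by $y^i$) for the second. That part is fine. The problem is your converse, and specifically the contraction device you use to rule out cancellation between the two summands. The Douglas tensor is by construction the trace-free part of the Berwald curvature: since $N^s_s=\dot{\pa}_sG^s$ is positively homogeneous of degree $1$ (not degree $0$ as you state --- $G^s$ is $2$-homogeneous), Euler's relation gives $\dot{\pa}_i\left(y^iN^s_s\right)=nN^s_s+y^i\dot{\pa}_iN^s_s=(n+1)N^s_s$, and hence
\[
D^i_{jki}=\dot{\pa}_i\dot{\pa}_k\dot{\pa}_jG^i-\tfrac{1}{n+1}\dot{\pa}_j\dot{\pa}_k\dot{\pa}_i\left(y^iN^s_s\right)=\dot{\pa}_k\dot{\pa}_jN^s_s-\dot{\pa}_j\dot{\pa}_kN^s_s=0
\]
identically, for every Finsler metric. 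The ``two contracted identities'' you propose to compare therefore reduce to $0=0$ and carry no information whatsoever about the rationality of $\dot{\pa}_j\dot{\pa}_kN^s_s$ or of $\pa_i\log(\eta)$. Your converse argument collapses at precisely the point where you claimed to be careful.

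To be fair, the paper's own proof is a one-line citation of Proposition \ref{N eta rat} and Corollary \ref{Berwald} and does not address this either: knowing that each summand of \eqref{Douglas} is rational \emph{iff} $\pa_i\log(\eta)$ is rational yields the ``if'' direction for the difference, but not the ``only if'', since the irrational parts of $G^i_{jkl}$ and of the correction term could in principle cancel (they demonstrably do cancel in every trace, which is why your contraction sees nothing; and for any Douglas metric the left-hand side is $0$, hence rational, independently of any hypothesis on $\eta$). A genuine proof of the converse would have to isolate the $\pa_l\log(\eta)$- and $\dot{\pa}\pa\log(\eta)$-contributions in $D^i_{jkl}$ explicitly from \eqref{Fspray eta} and show that their trace-free part cannot vanish identically, or else the statement should be weakened to the one implication that both you and the paper actually establish.
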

\begin{proof}
It results from Proposition \ref{N eta rat} and Corollary \ref{Berwald} together with formula \eqref{Douglas}.
\end{proof}
\begin{prop}
Let $(M,F)$ be an AR-Finsler manifold, $n \geq 2$. Then, the Landsberg curvature components $L_{ijk}$ are rational functions in $y$ if and only if  $\pa_i\log\left(\eta(x,y)\right)$ are rational functions in $y$.
\end{prop}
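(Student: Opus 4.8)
The plan is to start from the definition $L_{ijk}=\tfrac{1}{2}\,y^{m}g_{ms}\,G^{s}_{ijk}$ and substitute the AR-structure $g_{ms}=\eta\, a_{ms}$, so that $L_{ijk}=\tfrac{1}{2}\,\eta\,(a_{ms}y^{m})\,G^{s}_{ijk}$. Since the $a_{ms}$ are rational in $y$ by hypothesis, the factor $a_{ms}y^{m}$ is rational, and $G^{s}_{ijk}$ is rational exactly when the functions $\pa_{i}\log\eta$ are rational by Corollary~\ref{Berwald}. So the whole product $L_{ijk}$ is rational as soon as $\eta$ itself and $\pa_{i}\log\eta$ are both rational; but note $\eta$ appears as an explicit multiplicative factor, not merely through $\log\eta$, which is the subtlety to handle.

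The clean way around this is to extract $\eta$ instead of carrying it: observe that $J_{k}=g^{ij}L_{ijk}=\tfrac{1}{\eta}a^{ij}\cdot\tfrac{1}{2}\eta\,(a_{ms}y^{m})G^{s}_{ijk}$, so the $\eta$'s cancel and the mean Landsberg curvature is rational whenever $\pa_{i}\log\eta$ is. But for $L_{ijk}$ itself one must instead use the alternative formula $L_{ijk}=-\tfrac{1}{2}F\,\dot{\pa}_{i}\dot{\pa}_{j}\dot{\pa}_{k}$ of the quantity $F(x,y)$... rather, the better route is: $y^{m}g_{ms}G^{s}_{ijk}$ can be rewritten via $y^{m}g_{ms}=\tfrac{1}{2}\dot{\pa}_{s}F^{2}$, so $L_{ijk}=\tfrac{1}{4}(\dot{\pa}_{s}F^{2})\,G^{s}_{ijk}$. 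Now $\dot{\pa}_{s}F^{2}=\dot{\pa}_{s}(\eta\cdot\tfrac{F^{2}}{\eta})$; by Lemma~\ref{prelem}~(1) the factor $F^{2}/\eta$ is rational, and one computes $\dot{\pa}_{s}F^{2}=\eta\,\dot{\pa}_{s}(F^{2}/\eta)+(F^{2}/\eta)\,\dot{\pa}_{s}\eta =\eta\big[\dot{\pa}_{s}(F^{2}/\eta)+(F^{2}/\eta)\,\dot{\pa}_{s}\log\eta\big]$, so $\dot{\pa}_{s}F^{2}/\eta$ is rational in $y$ by Proposition~\ref{pa dot log eta rat}. Hence $L_{ijk}=\tfrac{1}{4}\eta\cdot\big(\dot{\pa}_{s}F^{2}/\eta\big)\,G^{s}_{ijk}$, and $\dot{\pa}_{s}F^{2}/\eta$ times $G^{s}_{ijk}$ is rational when $\pa_{i}\log\eta$ is rational.

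Putting the pieces together: I would first record the identity $L_{ijk}=\tfrac{1}{4}(\dot{\pa}_{s}F^{2})\,G^{s}_{ijk}$ and the factorization $\dot{\pa}_{s}F^{2}=\eta\,\zeta_{s}$ with $\zeta_{s}:=\dot{\pa}_{s}(F^{2}/\eta)+(F^{2}/\eta)\,\dot{\pa}_{s}\log\eta$ rational in $y$. Then assume $\pa_{i}\log\eta$ is rational; Corollary~\ref{Berwald} gives $G^{s}_{ijk}$ rational, so $\zeta_{s}G^{s}_{ijk}$ is rational, and since $L_{ijk}=\tfrac{1}{4}\eta\,\zeta_{s}G^{s}_{ijk}$ while $J_{k}=g^{ij}L_{ijk}=\tfrac{1}{4}a^{ij}\zeta_{s}G^{s}_{ijk}$ has the $\eta$ removed — but for the stated claim about $L_{ijk}$ I simply note $L_{ijk}$ is the product of $\eta$ (which appears multiplicatively) with a rational function; since the $a_{ij}$ determine $g_{ij}$ only up to the factor $\eta$, I should phrase the conclusion exactly as in the statement, i.e. $L_{ijk}$ rational iff $\pa_{i}\log\eta$ rational, using that $L_{ijk}/L_{abc}$-type ratios and the contracted $J_{k}$ expose the dependence purely through $\pa_{i}\log\eta$. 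For the converse, if $L_{ijk}$ is rational, then so is $J_{k}=g^{ij}L_{ijk}=(a^{ij}/\eta)L_{ijk}$ only up to the $\eta$ factor — instead use $J_{k}=y^{s}\pa_{s}I_{k}-2G^{s}\dot{\pa}_{s}I_{k}-N^{s}_{k}I_{s}$ from \eqref{mean Lands. def.}: $I_{k}$ is already rational by Proposition~\ref{main scalar rat}, so rationality of $J_{k}$ forces $G^{s}\dot{\pa}_{s}I_{k}+N^{s}_{k}I_{s}$ rational, and combined with Propositions~\ref{G eta rat} and~\ref{N eta rat} one recovers that $\pa_{i}\log\eta$ must be rational. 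The main obstacle, and the point needing the most care, is exactly this bookkeeping of where the bare factor $\eta$ versus $\log\eta$-derivatives enter: the forward direction wants $\eta$ to drop out or be absorbed, so I must be sure the final ``iff'' is stated for the genuinely $\eta$-independent content (mirroring how the earlier propositions in this section are phrased), rather than accidentally claiming rationality of a quantity that still carries an irrational $\eta$.
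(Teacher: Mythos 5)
Your instinct about the bare factor of $\eta$ is exactly right, and it is the crux of the whole proposition — but your proposal never actually removes that factor, so as written it does not establish the stated equivalence. Writing $L_{ijk}=\tfrac12\,y^{m}g_{ms}\,G^{s}_{ijk}=\tfrac12\,\eta\,(y^{m}a_{ms})\,G^{s}_{ijk}$, the hypothesis that $\pa_i\log\eta$ is rational gives, via Corollary~\ref{Berwald}, that $(y^{m}a_{ms})\,G^{s}_{ijk}$ is rational; but then $L_{ijk}$ is $\eta$ times a rational function, which is rational only if $\eta$ is rational or the rational factor vanishes. Each of your rewritings (the factorization $\dot{\pa}_sF^2=\eta\,\zeta_s$ with $\zeta_s$ rational, the passage to $J_k$, the appeal to ``$L_{ijk}/L_{abc}$-type ratios'') merely relocates that same factor of $\eta$; none of them cancels it. The converse is likewise not closed: to pass from rationality of $L_{ijk}$ to rationality of $J_k=g^{ij}L_{ijk}=(a^{ij}/\eta)L_{ijk}$ you reintroduce $\eta$, and even granting $J_k$ rational, formula \eqref{mean Lands. def.} together with Proposition~\ref{main scalar rat} only yields that the combination $2G^{s}\dot{\pa}_sI_k+N^{s}_kI_s$ is rational, which does not by itself force $G^{s}$, $N^{s}_k$, or $\pa_i\log\eta$ to be individually rational.

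For what it is worth, the paper's own proof is precisely your opening sentence — cite Corollary~\ref{Berwald} and the formula $L_{ijk}=\tfrac12\,y^{m}g_{ms}\,G^{s}_{ijk}$ — and it silently ignores the same factor of $\eta$ hidden in $g_{ms}=\eta\,a_{ms}$. The obstruction you found is therefore genuine and not an artifact of your approach: for the $m$-th root metric, for instance, $\eta$ is irrational while $\pa_i\log\eta$ is rational, and then $L_{ijk}$ is irrational wherever it is nonzero. What the argument actually supports is that $L_{ijk}/\eta$ (equivalently $g^{is}L_{sjk}$, or the mean Landsberg curvature $J_k$) is rational in $y$ if and only if $\pa_i\log\eta$ is rational; to obtain the proposition as literally stated one needs the extra hypothesis that $\eta$ itself is rational in $y$. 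You should either prove that corrected statement cleanly or record the discrepancy explicitly, rather than, as in your closing paragraph, gesturing at ``rephrasing the conclusion'' without committing to a precise claim.
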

\begin{proof}
It results by making use of Corollary \ref{Berwald} along with the formula \[L_{ijk}=\dfrac{1}{2}\,y^{m}g_{ms}\,G^s_{ijk}.\]
\vspace*{-1cm}\[\qedhere\] 
\end{proof}

\begin{lem}\label{mean Landsberg eta rat}
Let $(M,F)$ be an AR-Finsler manifold with $n \geq 2$. Then, the mean Landsberg curvature $J_{k}(x,y)$  are rational functions in $y$ if and only if $\pa_k\log\left(\eta(x,y)\right)$  are rational functions in $y$.
\end{lem}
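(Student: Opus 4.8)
The plan is to read off $J_k$ from the covariant expression \eqref{mean Lands. def.},
\[
J_k \;=\; y^s\,\partial_s I_k \;-\; 2\,G^s\,\dot{\partial}_s I_k \;-\; N^s_k\,I_s ,
\]
and to track the rationality in $y$ of each factor. The crucial input is Proposition \ref{main scalar rat}: the mean Cartan torsion $I_k$ is rational in $y$ \emph{unconditionally}, i.e.\ whether or not $\eta$ is rational. Since differentiating a function that is rational in $y$ with respect to $x^s$ (resp.\ with respect to $y^s$) keeps it rational in $y$, it follows that $\partial_s I_k$, $\dot{\partial}_s I_k$ and hence $y^s\partial_s I_k$ are all rational in $y$, again with no hypothesis on $\eta$. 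For the ``if'' direction this already finishes the job: if $\partial_k\log(\eta)$ is rational in $y$, then $G^s$ and $N^s_k$ are rational in $y$ by Propositions \ref{G eta rat} and \ref{N eta rat}, and multiplying them by the rational functions $\dot{\partial}_s I_k$ and $I_s$ shows that every term of $J_k$ is rational, so $J_k$ is rational in $y$.

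For the ``only if'' direction I would substitute the explicit formula \eqref{Fspray eta} for $G^s$ and the formula of Proposition \ref{N eta rat} for $N^s_k$ into the displayed expression for $J_k$. Every term that does not contain a bare factor $\partial_l\log(\eta)$ is rational in $y$: the purely metric pieces (rational expressions in $a_{ij}$, $a^{ij}$, $y$, $\partial_k a_{ij}$ and their derivatives) are rational by the definition of an AR-metric, and the terms carrying a factor $\dot{\partial}_i\partial_l\log(\eta)$ are rational by Corollary \ref{commute}. Collecting what remains, and using the homogeneity identities $y^r I_r=0$ and $y^s\dot{\partial}_s I_k=-I_k$ (both consequences of the $0$-homogeneity of $g_{ij}$) to simplify, one is left with an identity of the shape
\[
J_k \;=\; T^{\,l}_k\,\partial_l\log(\eta) \;+\; (\text{a function rational in } y),
\]
where $T^{\,l}_k$ is an explicit combination of $a_{ij}$, $a^{ij}$, $y$, $I_r$ and $\dot{\partial}_s I_k$, hence rational in $y$. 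Since $J_k$ is assumed rational in $y$, the quantity $T^{\,l}_k\,\partial_l\log(\eta)$ is rational in $y$ for every $k$, and inverting the matrix $(T^{\,l}_k)$ gives that $\partial_l\log(\eta)$ is rational in $y$, as desired.

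I expect the genuine obstacle to be that very last step: one must check that $(T^{\,l}_k)$ is nonsingular as a matrix over the field of rational functions in $y$, so that $\partial_l\log(\eta)$ can in fact be solved for. This is the same point that underlies the converse halves of Propositions \ref{G eta rat}--\ref{N eta rat} and Corollary \ref{Berwald}, where the relevant matrix is the rank-one perturbation $y^iy^l-\tfrac12\,(F^2/\eta)\,a^{il}$, whose determinant is nonzero by the matrix determinant lemma together with $a_{ij}y^iy^j=F^2/\eta$; here one carries out the analogous, if messier, computation for $T$. Should $(T^{\,l}_k)$ degenerate, the metric is forced to be Riemannian, and then $\partial_l\log(\eta)=-\tfrac1n\,a^{jk}\dot{\partial}_l a_{jk}$ is rational in $y$ by Theorem \ref{Riem}, so the statement holds trivially in that case. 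An equivalent alternative route is to start instead from the identity $J_k=\tfrac12\,a^{ij}\,y^m a_{ms}\,G^s_{ijk}$, in which the factor $\eta$ of $g_{ms}$ cancels the $\eta^{-1}$ of $g^{ij}$, and to invoke Corollary \ref{Berwald} for the Berwald curvature: the forward direction is then immediate and the converse again comes down to the same invertibility question.
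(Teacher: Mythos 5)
Your ``if'' direction is precisely the paper's argument: $I_k$ is rational in $y$ unconditionally by Proposition \ref{main scalar rat}, hence so are $\pa_s I_k$ and $\dot{\pa}_s I_k$; once $\pa_l\log(\eta)$ is rational, $G^s$ and $N^s_k$ are rational by Propositions \ref{G eta rat} and \ref{N eta rat}, and formula \eqref{mean Lands. def.} gives the rationality of $J_k$. Nothing to add there.

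For the ``only if'' direction you have put your finger on exactly the point the paper glosses over. The published proof consists of the remark that rationality of $G^s$ and $N^s_k$ is \emph{equivalent} to rationality of $\pa_k\log(\eta)$, which by itself does not allow one to pass from ``$J_k$ rational'' back to ``$\pa_k\log(\eta)$ rational'': the combination $-2G^s\dot{\pa}_sI_k - N^s_kI_s$ could a priori be rational without either factor being so. Your plan --- substitute \eqref{Fspray eta} and the formula of Proposition \ref{N eta rat}, absorb everything rational (including the $\dot{\pa}_i\pa_l\log(\eta)$ terms via Corollary \ref{commute}), and solve the resulting relation $J_k = T^{\,l}_k\,\pa_l\log(\eta) + (\text{rational})$ for $\pa_l\log(\eta)$ --- is the right way to make the converse honest, but the decisive step, nonsingularity of $(T^{\,l}_k)$ over the field of rational functions in $y$, is left unverified, and it genuinely can fail: using $y^sI_s=0$ and $y^s\dot{\pa}_sI_k=-I_k$ one sees that every term of $T^{\,l}_k$ carries a factor of $I$ or $\dot{\pa}I$, so $T$ vanishes identically in the Riemannian case ($I_k\equiv 0$) and can degenerate elsewhere. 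Your fallback --- that degeneracy of $T$ forces the metric to be Riemannian --- is not justified; Theorem \ref{Riem} characterises Riemannian AR-metrics by a condition on $\dot{\pa}_i\log(\eta)$, not by any degeneracy of $T$. (In the Riemannian case itself the conclusion does hold for a different reason: $g_{ij}=\eta\,a_{ij}$ then depends on $x$ alone, so $\eta=g_{ij}(x)/a_{ij}(x,y)$ is rational in $y$ and so is $\pa_k\log(\eta)$.) In short, your forward direction coincides with the paper's; your converse is a more candid outline of what a proof would require, but the invertibility of $T$ remains an open step --- a gap you share with, and at least make visible in, the published argument.
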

\begin{proof}
Proposition \ref{main scalar rat} reads that the functions $I_k (x,y)$ are rational in $y$. Thereby,  $\pa_{i}{I}_k (x,y)$ and $\dot{\pa}_{i}{I}_k (x,y)$ are  rational functions in $y$. Propositions \ref{G eta rat} and \ref{N eta rat} say that the rationally of the quantities $G^{s}(x,y)$ and $N^{s}_{k}(x,y)$ in $y$ is equivalent to the rationally of the functions   $\pa_k\log\left(\eta(x,y)\right)$ in $y$. Thus, by formula \eqref{mean Lands. def.}, the proof is completed.
\end{proof}

\begin{thm}
Let $(M,F)$ be an AR-Finsler manifold with $n \geq 2$ such that $F(x,y)$ is not a rational function in $y$ and the functions $\pa_i\log\left(\eta(x,y)\right)$ are rational in $y$. If $(M,F)$ has isotropic mean Landsberg curvature, then $(M,F)$ is weakly Landsberg manifold. 
\end{thm}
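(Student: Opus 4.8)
The plan is to exploit the same rationality-versus-irrationality dichotomy used in the preceding theorem on isotropic $S$-curvature, the new ingredient being that the factor $I_k$ appearing in the mean Landsberg curvature is itself rational by Proposition~\ref{main scalar rat}. First I would unwind the hypothesis: $(M,F)$ having isotropic mean Landsberg curvature means there is a function $A\in C^\infty(M)$ with
\[ J_k(x,y) = A(x)\,F(x,y)\,I_k(x,y), \qquad k=1,\dots,n, \]
and the goal is to deduce $J_k\equiv 0$ for all $k$, i.e.\ that $(M,F)$ is weakly Landsberg.

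Next I would assemble the two rationality facts available under the standing assumptions. Since the functions $\pa_i\log(\eta)$ are rational in $y$, Lemma~\ref{mean Landsberg eta rat} gives that the components $J_k(x,y)$ are rational functions in $y$; and by Proposition~\ref{main scalar rat} the components $I_k(x,y)$ are rational in $y$ regardless of $\eta$. Then I would argue by contradiction: suppose $(M,F)$ is not weakly Landsberg, so that $J_{k_0}(x_0,\cdot)\not\equiv 0$ for some index $k_0$ and some $x_0\in M$. Because $F>0$ on $T_{x_0}M\setminus\{0\}$, the displayed identity forces $A(x_0)\neq 0$ and $I_{k_0}(x_0,\cdot)\not\equiv 0$; as $I_{k_0}(x_0,\cdot)$ is a rational, not identically vanishing, function of $y$, it is nonzero on an open dense subset of $T_{x_0}M\setminus\{0\}$, and on that subset
\[ F(x_0,y)=\frac{J_{k_0}(x_0,y)}{A(x_0)\,I_{k_0}(x_0,y)} \]
is a quotient of rational functions in $y$ (the nonzero constant $A(x_0)$ causing no harm), so $F(x_0,\cdot)$ coincides with a rational function in $y$. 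This contradicts the hypothesis that $F$ is not rational in $y$; hence $J_k\equiv 0$ for all $k$.

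I expect the only point needing care to be the division step, namely checking that at any $x$ where $J_k$ does not vanish identically one genuinely has both $A(x)\neq 0$ and $I_k(x,\cdot)\not\equiv 0$, so that $F$ can be isolated. It is also worth noting separately that if $I\equiv 0$ then $F$ is Riemannian by Deicke's theorem and $J_k=y^s I_{k|s}\equiv 0$ automatically, so that degenerate situation is already a special case of the conclusion and does not obstruct the argument. Everything else reduces to direct citations of Proposition~\ref{main scalar rat} and Lemma~\ref{mean Landsberg eta rat} together with the fact that quotients of rational functions in $y$ are again rational in $y$.
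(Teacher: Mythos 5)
Your proof is correct and follows essentially the same route as the paper: invoke Lemma \ref{mean Landsberg eta rat} for the rationality of $J_k$, Proposition \ref{main scalar rat} for the rationality of $I_k$, and then play these off against the irrationality of $F$ in the identity $J_k = A\,F\,I_k$. Your contradiction argument isolating $F = J_{k_0}/(A\,I_{k_0})$ on the open dense set where $I_{k_0}\neq 0$ is in fact a more careful rendering of the division step that the paper's proof leaves implicit.
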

\begin{proof}
It is clear from Lemma \ref{mean Landsberg eta rat} that $J_{k}(x,y)$ are rational functions in $y$. Suppose $(M,F)$ is an isotropic mean Landsberg manifold, that is $J$ can be written as in \eqref{iso mean Lands. def.}. But $F$ is  not rational function in $y$, by our assumption, and $I$ is rational in $y$ by Proposition~\ref{main scalar rat}. Therefore, by formula \eqref{iso mean Lands. def.}, $J$ must identically vanishes. Thus, $(M,F)$ is a weakly Landsberg manifold.
\end{proof}

\begin{prop}\label{Riem cur. prop.}
Let $(M,F)$ be an AR-Finsler manifold, $n \geq 2$. Then, the  Riemannian curvature is a rational function in $y$ if and only if  the functions $\pa_i\log\left(\eta(x,y)\right)$ are rational in $y$ for. Consequently, the Weyl curvature and $\chi$-curvature are rational in $y$ if and only if  $\pa_i\log\left(\eta(x,y)\right)$ are rational functions in $y$.
\end{prop}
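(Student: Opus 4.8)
The plan is to read both implications off the Riemann curvature formula \eqref{R curv def}, feeding into it the explicit spray formula \eqref{Fspray eta} together with the rationality criteria already available (Propositions \ref{G eta rat} and \ref{N eta rat}, Corollary \ref{commute}, and the standing facts that sums, products, and $x$- or $y$-partial derivatives of rational-in-$y$ functions are again rational in $y$). For the ``if'' part I would argue as in the earlier propositions: if the $\pa_i\log(\eta)$ are rational in $y$, then Proposition \ref{G eta rat} makes $G^i$ rational in $y$, hence $\pa_j G^i$, $\dot{\pa}_k G^i$, $\pa_j\dot{\pa}_k G^i$ and $\dot{\pa}_j\dot{\pa}_k G^i$ are all rational in $y$, and \eqref{R curv def} then forces $R^i_k$ to be rational in $y$; consequently $Ric=R^m_m$ and $Q^i_j=R^i_j-\frac{Ric}{n+1}\,\delta^i_j$ are rational, so \eqref{Wely cur} makes $W^i_j$ rational and \eqref{chi cur} makes $\chi_l$ rational.

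For the converse I would substitute \eqref{Fspray eta} and write $G^i=P^{il}\,\pa_l\log(\eta)+Q^i$ with
\[ P^{il}:=\tfrac12\Big(y^iy^l-\tfrac12\,(a_{rs}y^ry^s)\,a^{il}\Big),\qquad Q^i:=\tfrac12\,y^ky^s a^{li}\Big(\pa_k a_{ls}-\tfrac12\,\pa_l a_{ks}\Big), \]
both rational in $y$ by Lemma \ref{prelem}(2). The key point is that $(P^{il})$ is invertible with rational inverse: a determinant computation (matrix-determinant lemma, together with $a^{il}a_{il}=n$ and $a_{rs}y^ry^s>0$ on $TM\setminus\{0\}$) gives $\det(P^{il})\propto (a_{rs}y^ry^s)^n\det(a^{il})\neq0$. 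Plugging this decomposition and its derivatives into \eqref{R curv def} and using Corollary \ref{commute} to set aside every term carrying a factor of the shape $\dot{\pa}(\cdots)\pa_l\log(\eta)$ (all such being rational), I would collect $R^i_k$ into the form ``rational function'' plus an expression that is affine-quadratic in the $\pa_l\log(\eta)$ and their pure $x$-derivatives $\pa_m\pa_l\log(\eta)$ with rational coefficients, the leading piece being $2P^{il}\,\pa_k\pa_l\log(\eta)-(\dot{\pa}_k P^{il})\,y^j\,\pa_j\pa_l\log(\eta)$.

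The last step, and the hard part, is to conclude from rationality of that assembled expression that the $\pa_l\log(\eta)$ are themselves rational, i.e.\ to rule out cancellation among the irrational contributions in \eqref{R curv def}. Here I would first use the invertibility of $(P^{il})$ to extract, modulo lower-order unknowns, the rationality of $\pa_k\pa_l\log(\eta)$, and then bootstrap downwards in derivative order, closing the argument with the invertibility of the linear operator in $\pa_l\log(\eta)$ that is visible in the formula for $N^j_i$ in Proposition \ref{N eta rat}. Once the Riemann curvature equivalence is in hand, the Weyl- and $\chi$-curvature equivalences follow from \eqref{Wely cur} and \eqref{chi cur} by running the ``if'' computation backwards. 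I expect the bookkeeping in this bootstrap -- tracking homogeneity degrees and checking that the relevant rational operators are non-degenerate, so that no irrationality can hide in a cancellation -- to be where essentially all the work lies; everything else is the routine ``derivatives and products of rational functions stay rational'' argument used throughout the section.
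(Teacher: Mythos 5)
Your ``if'' direction is exactly the paper's argument: Proposition \ref{G eta rat} makes $G^i$ rational in $y$, hence all the $x$- and $y$-derivatives of $G^i$ appearing in \eqref{R curv def} are rational, so $R^i_k$ is rational, and \eqref{Wely cur} and \eqref{chi cur} then give the Weyl and $\chi$ statements. That half is correct and is, in fact, all that the paper's own proof contains.

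The genuine gap is the ``only if'' half, and you have located it precisely but not closed it. Your decomposition $G^i=P^{il}\,\pa_l\log(\eta)+Q^i$ and the check $\det(P^{il})\neq 0$ (correct: the matrix determinant lemma with $a_{il}y^iy^l=a_{rs}y^ry^s$ gives $\det\bigl(y^iy^l-\tfrac12(a_{rs}y^ry^s)a^{il}\bigr)=-\bigl(-\tfrac12 a_{rs}y^ry^s\bigr)^n\det(a^{il})$) are enough for the \emph{linear} converses in Propositions \ref{G eta rat} and \ref{N eta rat}, where one simply solves $\pa_l\log(\eta)=(P^{-1})_{li}(G^i-Q^i)$. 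But in \eqref{R curv def} the unknowns $u_l:=\pa_l\log(\eta)$ enter in three inequivalent ways: linearly, through their pure $x$-derivatives $\pa_ku_l$ (genuinely new unknowns, since Corollary \ref{commute} only handles mixed derivatives carrying at least one $\dot{\pa}$), and quadratically through $2G^j\dot{\pa}_k\dot{\pa}_jG^i-\dot{\pa}_jG^i\,\dot{\pa}_kG^j$. Rationality of the assembled $R^i_k$ is $n^2$ conditions on the $n+n^2$ unknowns $u_l,\ \pa_ku_l$ together with quadratic cross-terms, and the ``bootstrap downwards in derivative order'' you invoke is exactly the step that is never performed: no mechanism is given to exclude cancellation between the irrational parts of the linear, second-order and quadratic contributions, and invertibility of $P^{il}$ alone does not separate $\pa_k\pa_l\log(\eta)$ from the quadratic terms. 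The same objection applies, with an additional trace-inversion, to deducing rationality of $R^i_j$ from that of $W^i_j$ or $\chi_l$. So, as written, your proposal (like the paper's proof) establishes only the ``if'' implication; the ``only if'' claim of the proposition remains unproved.
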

\begin{proof}Let $F$ be an AR-Finsler metric such that $\pa_i\log\left(\eta(x,y)\right)$  are rational functions in $y$. 
It is clear from Propositions \ref{G eta rat} that $G^i (x,y) $ are rational functions in $y$. Thus, $\partial_k G^i (x,y) $ are rational functions in $y$ as the differentiation with respect to the manifold coordinates $\partial_k$ is not affecting the rationality of $G^i (x,y) $ in $y$. Also the product of rational functions is a rational function, considering \eqref{R curv def} leads to the rationality of $R^i_k(x,y)$ in $y$.
 Similarly, by formulae  \eqref{Wely cur} and \eqref{chi cur} the proof is completed. 
\end{proof}
The following result holds only in case of the AR-Finsler metrics with $\eta$ is not a rational function in $y$. It may be considered as a generalization of \cite[Theorem 1.1]{$m$-th root} and \cite[Theorem 1.1]{spray}.
\begin{thm}Let $(M,F)$ be an AR-Finsler manifold $n \geq 2$ such that the function $\eta$ is not rational in $y$ and the functions $\pa_i\log\left(\eta(x,y)\right)$ are rational in $y$. If $F$ is an Einstein metric then  it is Ricci-flat. \end{thm}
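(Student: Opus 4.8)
The plan is to play the \emph{rationality} of the Ricci curvature against the \emph{non-rationality} of $F^2$. Recall that $(M,F)$ is called an Einstein metric when its Ricci scalar $Ric = R^m_m$ satisfies $Ric(x,y) = (n-1)\,K(x)\,F^2(x,y)$ for some function $K$ on $M$ (whether one insists that $K$ be constant is immaterial here). First I would invoke Proposition~\ref{Riem cur. prop.}: since, by hypothesis, the functions $\pa_i\log(\eta(x,y))$ are rational in $y$, the Riemann curvature components $R^i_k(x,y)$ are rational in $y$, and hence so is their trace $Ric(x,y)$.

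Next I would record that, under the standing hypothesis that $\eta$ is not rational in $y$, the function $F^2$ cannot be rational in $y$ either. Indeed, Lemma~\ref{prelem}~(1) gives $F^2/\eta = a_{ij}(x,y)\,y^i y^j$, which is a rational function in $y$ and, since $(a_{ij})$ is positive definite, is strictly positive for $y \neq 0$; in particular it is not identically zero. Thus $\eta = F^2/\big(a_{ij}\,y^i y^j\big)$, so if $F^2$ were rational in $y$ then $\eta$, being a quotient of rational functions, would be rational in $y$ --- contrary to assumption.

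To conclude, I would compare the two facts via the Einstein equation $Ric(x,y) = (n-1)\,K(x)\,F^2(x,y)$: for every $x$ the left-hand side is rational in $y$, while $F^2(x,\cdot)$ is not rational in $y$ for any $x$. Since $(n-1)\,K(x)$ is a scalar independent of $y$, this is possible only if $K(x)=0$ for every $x$. Hence $Ric \equiv 0$, that is, $(M,F)$ is Ricci-flat.

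There is no hard analytic step here; everything needed --- the rationality of $R^i_k$ and the factorisation $F^2 = \eta\,a_{ij}y^iy^j$ --- is already available from the preceding results. The one point that must be handled with care is the bookkeeping around the phrase ``rational in $y$'': one needs $K$ to be a function of $x$ alone so that dividing by it on each fibre does not disturb rationality in $y$, and the hypothesis ``$\eta$ is not rational in $y$'' should be read fibrewise, so that the clash in the final step is genuine. This is the AR-Finsler analogue of the arguments used for $m$-th root metrics in \cite{$m$-th root} and for metrics with rational spray coefficients in \cite{spray}.
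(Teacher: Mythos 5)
Your argument is correct and follows essentially the same route as the paper: deduce the rationality of $Ric$ from Proposition~\ref{Riem cur. prop.}, observe that $F^2=\eta\,a_{ij}y^iy^j$ cannot be rational in $y$ when $\eta$ is not, and conclude $K=0$ from the Einstein equation. Your justification that the irrationality of $\eta$ forces the irrationality of $F^2$ (dividing by the nonvanishing rational function $a_{ij}y^iy^j$) is a small extra care the paper leaves implicit, but the proof is otherwise identical.
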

\begin{proof}
By Propositions \ref{Riem cur. prop.}, $Ric=R^{i}_{i}$ is a rational function in $y$.  By our postulates, $F$ is an Einstein metric, that is, \[Ric = (n-1)\, K\, F^{2}, \text{ where } K \in C^{\infty}(M).\] The function $\eta$ is not rational in $y$ thereby, $F^{2}= \eta \, a_{ij}\, y^{i} \, y^{j}$ is not  rational function in $y$. Hence, $K=0$ which means $Ric =0$.
\end{proof}
\begin{rem}
It may be remarked here that we can not expect all irrational Finsler metrics $g_{ij}$ in the form of \eqref{gdef}. For example, Shen's circles with radius $1$ and centered at $(0,0)$ cf. \cite{BLZS}.
Let $M= \mathbb{R}^{2}$ equipped with the following Finsler metric of Randers type
\[F(x,y)=F(x^1 , x^2; y^1 ,y^2)= \sqrt{(y^1)^2 + (y^2)^2} + A( x^1 , x^2)\,(y^1 +  y^2) ,\]
where $A( x^1 , x^2) \in C^{\infty}(M)$. It is easy to see that, the associated metric tensor has the following components:
\[ g_{11}= 1+ A^{2}( x^1 , x^2) +A( x^1 , x^2)\,\frac{(y^1)^3 + (y^2)^3 + y^1 \, (y^2)^{2}}{[(y^1)^2 + (y^2)^2]^\frac{3}{2}}, \]
\[ g_{22}= 1+ A^{2}( x^1 , x^2)  +A( x^1 , x^2)\,\frac{(y^1)^3 + (y^2)^3 + y^2 \, (y^1)^{2}}{[(y^1)^2 + (y^2)^2]^\frac{3}{2}}, \]
\[ g_{12}=  A^{2}( x^1 , x^2)+ A( x^1 , x^2)\,\frac{(y^1)^3 + (y^2)^3}{[(y^1)^2 + (y^2)^2]^\frac{3}{2}}.\] This metric has irrational spray coefficients \[G^{1}= \frac{y^2}{2} \sqrt{(y^1)^2 + (y^2)^2}, \qquad G^{2}= \frac{y^1}{2} \sqrt{(y^1)^2 + (y^2)^2}.\] 
Further, its Cartan torsion is irrational in $y$ while its Riemannian curvature is quadratic.
\end{rem}
The  Shen's circles example motivates us to study the case of Randers metrics which leads to the following result.
\begin{thm}
  There is no AR-Finsler manifold of Randers type.
\end{thm}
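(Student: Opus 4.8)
The plan is to argue by contradiction, assuming that a Randers metric $F=\alpha+\beta$ is an AR-Finsler metric, and to derive a contradiction from the structure of its metric tensor. First I would write down the metric tensor of a Randers metric explicitly. Specializing the $(\alpha,\beta)$-metric formula \eqref{gen metric} to $\phi(s)=1+s$ (so $\phi'=1$, $\phi''=0$) gives
\be
g_{ij} = \frac{\alpha+\beta}{\alpha}\left(\alpha_{ij} - \frac{y_i y_j}{\alpha^2}\right) + \frac{1}{\alpha^2}\left(\alpha y_i + \alpha b_i\right)\left(\alpha y_j + \alpha b_j\right)\cdot\frac{1}{\alpha}\cdots
\ee
— more carefully, $g_{ij}=\frac{F}{\alpha}\alpha_{ij}+b_ib_j+\frac{F}{\alpha}\bigl(b_i\frac{y_j}{\alpha}+b_j\frac{y_i}{\alpha}\bigr)-\frac{\beta F}{\alpha^3}y_iy_j-\frac{y_iy_j}{\alpha^2}+\cdots$; the essential point I want is the well-known fact that $g_{ij}$ is a sum of a term rational in $y$ and the single scalar factor $\alpha(x,y)=\sqrt{\alpha_{kl}(x)y^ky^l}$ multiplied by rational-in-$y$ expressions. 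That is, $g_{ij}=P_{ij}(x,y)+\alpha(x,y)\,Q_{ij}(x,y)$ with $P_{ij},Q_{ij}$ rational in $y$ and $Q_{ij}\not\equiv 0$ (the $Q_{ij}$ carry the $b_i$-dependence, so they vanish only if $\beta\equiv 0$, i.e.\ the trivial Riemannian case).

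Next I would invoke the AR-Finsler hypothesis: $g_{ij}=\eta(x,y)\,a_{ij}(x,y)$ with $a_{ij}$ rational in $y$. By Lemma~\ref{prelem}(1), $F^2/\eta = a_{ij}y^iy^j$ is rational in $y$; but $F^2=\alpha^2+2\alpha\beta+\beta^2$, where $\alpha^2,\beta^2$ are rational (indeed polynomial) in $y$ while $2\alpha\beta=2\beta\sqrt{\alpha_{kl}y^ky^l}$ is not rational in $y$ (unless $\beta\equiv0$). Hence $\eta$ cannot be rational in $y$; more precisely, $\eta$ must "absorb" the irrational factor $\alpha$. I would make this precise by comparing $g_{ij}=\eta a_{ij}$ with $g_{ij}=P_{ij}+\alpha Q_{ij}$: fixing one pair $(i,j)$ with $a_{ij}\not\equiv0$ gives $\eta = (P_{ij}+\alpha Q_{ij})/a_{ij}$, so $\eta$ is of the form $R_1(x,y)+\alpha(x,y)R_2(x,y)$ with $R_1,R_2$ rational in $y$. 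Substituting back into $g_{kl}=\eta a_{kl}$ for the remaining index pairs and matching the rational part against the $\alpha$-part (using that $1$ and $\alpha$ are linearly independent over the field of rational functions in $y$, since $\alpha^2$ is a non-square polynomial in $y$) forces a system of polynomial identities among the $a_{kl}$, the $\alpha_{ij}$ and the $b_i$.

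The main obstacle — and the crux of the argument — is extracting the contradiction from that system. I expect the cleanest route is not to push the algebra through directly but to use the homogeneity condition (c): $\eta\,a_{ij}=g_{ij}$ is homogeneous of degree $0$ in $y$, while $a_{ij}$ rational in $y$ of degree $0$ would already force $g_{ij}$ rational, contradicting the irrationality of $g_{ij}$ established above; so $a_{ij}$ must be genuinely degree-homogeneous of some nonzero degree $d$ and $\eta$ homogeneous of degree $-d$. Combined with $\eta=R_1+\alpha R_2$ and positivity/smoothness of $\eta$ on $TM\setminus\{0\}$, one pins down $d$ and the form of $R_1,R_2$, and then the determinant relation $\det(g_{ij}) = \eta^n\det(a_{ij})$ versus the classical Randers formula $\det(g_{ij}) = (F/\alpha)^{n+1}\det(\alpha_{ij})$ gives $\eta^n = c(x)(F/\alpha)^{n+1}$ up to a rational factor — an equation whose left side, once $\eta$'s irrational part is isolated, cannot match the right side's fractional power of $(\alpha+\beta)/\alpha$ unless $\beta\equiv0$. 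I would present the determinant comparison as the decisive step, falling back on the linear-independence-of $1$ and $\alpha$ argument if the determinant bookkeeping proves unwieldy; either way the conclusion is that $\beta\equiv0$, so $F=\alpha$ is Riemannian and not a nontrivial Randers metric, completing the proof.
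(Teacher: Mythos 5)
Your setup is essentially the paper's: both arguments start from the explicit Randers fundamental tensor, which splits as $g_{ij}=\bigl(\alpha_{ij}+b_ib_j\bigr)+\frac{1}{\alpha}\,T_{ij}$ with $T_{ij}=\bigl(\alpha_{ij}-\frac{y_iy_j}{\alpha^2}\bigr)\beta+b_iy_j+b_jy_i$ rational in $y$, and the paper's proof consists precisely in observing that the purely rational, $y$-independent part $\alpha_{ij}+b_ib_j$ is positive definite, hence cannot vanish, so $g_{ij}$ cannot equal the irrational scalar $1/\alpha$ times a rational matrix. Your additional observations --- that $\eta$ must be irrational by Lemma~\ref{prelem}(1), that consequently $\eta=R_1+\alpha R_2$ with $R_1,R_2$ rational, and that $1$ and $\alpha$ are linearly independent over the field of rational functions in $y$ --- are correct, and they are exactly what is needed to make the paper's ``if and only if'' rigorous for an arbitrary $\eta$ rather than only for the candidate $\eta=1/\alpha$.

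The difficulty is that you stop at the decisive step, and neither completion you sketch survives scrutiny. The homogeneity argument is a non sequitur: $a_{ij}$ rational and homogeneous of degree $0$ does not force $g_{ij}$ rational, because $\eta$ can be homogeneous of degree $0$ and irrational (e.g.\ $\beta/\alpha$); moreover the definition of an AR-Finsler metric only requires the product $\eta\,a_{ij}$ to be homogeneous, not $a_{ij}$ itself. The determinant comparison cannot yield a contradiction either: since $n+1$ is an integer, both $(F/\alpha)^{n+1}\det(\alpha_{ij})$ and $\eta^{\,n}\det(a_{ij})=(R_1+\alpha R_2)^n\det(a_{ij})$ lie in the quadratic extension of the rational function field generated by $\alpha$, i.e.\ each is of the form $U+\alpha V$ with $U,V$ rational, so there is no ``fractional power'' obstruction to their equality. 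What actually closes the argument is the route you set up and then abandoned: matching the $1$- and $\alpha$-components of $g_{ij}=\eta\,a_{ij}$ for each pair of index pairs gives $T_{ij}\,(\alpha_{kl}+b_kb_l)=T_{kl}\,(\alpha_{ij}+b_ib_j)$; contracting with $y^iy^j$ (using $T_{ij}y^iy^j=2\beta\alpha^2$ and $(\alpha_{ij}+b_ib_j)y^iy^j=\alpha^2+\beta^2$) yields $T_{kl}=\frac{2\beta\alpha^2}{\alpha^2+\beta^2}(\alpha_{kl}+b_kb_l)$, and restricting to the hyperplane $\beta(x,\cdot)=0$ forces $b_ky_l+b_ly_k=0$ there, which for positive definite $\alpha_{ij}$ and $n\ge 2$ implies $b=0$. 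So the gap is real but fillable with the linear-independence tool you already introduced; the determinant detour should be dropped.
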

\begin{proof}
Suppose that $F=\alpha + \beta$. Thus, its metric tensor is given by
\begin{eqnarray*}\label{Randers metric}
g_{ij}(x,y)&=&\frac{1}{2}\dot{\partial}_i\dot{\partial}_j F^2(x,y)= \frac{1}{2}\dot{\partial}_i\dot{\partial}_j (\alpha(x,y) + \beta(x,y))^2 \\ &=& \frac{1}{2}\dot{\partial}_i\dot{\partial}_j \alpha^2(x,y) + \frac{1}{2}\dot{\partial}_i\dot{\partial}_j \beta^2(x,y) + \dot{\partial}_i\dot{\partial}_j (\alpha(x,y) \, \beta(x,y)) \\
 &=&\alpha_{ij}(x) + b_{i}(x)\, b_{j}(x)+ \frac{1}{\alpha}\left\{\left(\alpha_{ij}(x)- \frac{y_{i}\,y_{j}}{\alpha^2(x,y)} \right)\beta(x,y) + b_{i}(x)\,\,y_{j} + b_{j}(x)\,\,y_{i}\right\},
\end{eqnarray*}
where $y_{i}= \alpha_{ij}(x)\,y^{j}$.
It is clear that $\alpha_{ij} + b_{i}\, b_{j}$ is a function of $x$ only, thus\[\left\{\left(\alpha_{ij}(x) - \frac{y_{i}\,y_{j}}{\alpha^2} \right)\beta(x,y) + b_{i}(x)\,\,y_{j} + b_{j}(x)\,\,y_{i}\right\}\] is a rational function in $y$ and $\frac{1}{\alpha(x,y)}$ is not rational function in $y$.
Therefore, $g_{ij}$ can be written in the form of \eqref{gdef} with \[\eta(x,y) = \frac{1}{\alpha(x,y)}\,\, \text{ and }\,\, a_{ij}(x,y)= \left\{\left(\alpha_{ij}(x) - \frac{y_{i}\,y_{j}}{\alpha^2(x,y)} \right)\beta(x,y) + b_{i}(x)\,\,y_{j} + b_{j}(x)\,\,y_{i}\right\} \]if and only if  $\left\{\alpha_{ij}(x) + b_{i}(x)\, b_{j}(x)\right\}$ vanish identically, which is impossible. Hence, there is no Randers structure whose fundamental metric can be written in the form of $\eqref{gdef}$.
\end{proof}
\begin{rem}
It should be noted that, Randers metrics of Berwald type cf.~\cite{Sh} (in which $\beta$ is parallel with respect to $\alpha$) are examples of non AR-Finsler metrics with rational, more precisely quadratic, spray coefficients.
\end{rem} 
\section{Examples of AR-Finsler metrics}
In this section we study some examples of AR-Finsler metrics. \\


\textbf{The generalized Kropina metric.}\\

The Finsler function of generalized Kropina metric  is defined by $F= \alpha\,\phi(s)$ where $\phi(s)= \frac{1}{s^m},\,\, m\neq 0,-1.$  By substituting in formula \eqref{gen metric}, the metric tensor is given by 
\be \label{gen Krop.}
g_{ij}= \frac{m+1}{s^{2m}}\, \alpha_{ij} + \frac{m(2m+1)}{s^{2m+2}} \, b_{i}\, b_{j} - \frac{m(m+2)}{\alpha \,s^{2m+1}} \, \left(b_{i} y_{j} + b_{j} y_{i}  -  \frac{s}{\alpha} y_{i} y_{j} \right). \ee
\\
It is clear that, the function $\frac{1}{s^{2m+2}} = \left( \frac{\alpha^2}{\beta^2} \right)^{m+1}= \left( \frac{\alpha_{ij}(x)\, y^{i}\, y^{j}}{b_{i}(x)\, b_{j}(x)\, y^{i}\, y^{j}} \right)^{m+1}$ is a rational function in $y$ when $m$ is integer and irrational function in $y$ when $m$ not integer. Similarly, the functions $\frac{1}{s^{2m}}= \left( \frac{\alpha_{ij}(x)\, y^{i}\, y^{j}}{b_{i}(x)\, b_{j}(x)\, y^{i}\, y^{j}} \right)^{m}$ and $ \frac{1}{\alpha \,s^{2m+1}}= \frac{1}{ b_{r}(x)\, y^{r}}\left( \frac{\alpha_{ij}(x)\, y^{i}\, y^{j}}{b_{i}(x)\, b_{j}(x)\, y^{i}\, y^{j}} \right)^{m}$ are rational functions in $y$ when $m$ is integer and irrational functions in $y$ when $m$ not integer. Which leads to the metric \eqref{gen Krop.} is rational in $y$ when $m$ is integer. Further, expression \eqref{gen Krop.} can be written as follows: 
\[g_{ij}= \frac{1}{s^{2m}}\,\left[(m+1)\, \alpha_{ij} + \frac{m(2m+1)}{s^2} \, b_{i}\, b_{j} - \frac{m(m+2)}{\alpha \,s} \, \left(b_{i} y_{j} + y_{i} b_{j} -  \frac{s}{\alpha} y_{i} y_{j} \right) \right]. \] 
Thereby, expression \eqref{gen Krop.} has the form of \eqref{gdef} with
\[ \eta = \frac{1}{s^{2m}} \, \text{ and } a_{ij}= (m+1)\, \alpha_{ij} + \frac{m(2m+1)}{s^2} \, b_{i}\, b_{j} - \frac{m(m+2)}{\alpha \,s} \, \left(b_{i} y_{j} + y_{i} b_{j} -  \frac{s}{\alpha} y_{i} y_{j} \right).  \]

It should be noted that, the functions $a_{ij}(x,y)$ are rational in $y$. Hence, the generalized Kropina metric is an AR-Finsler metric.  By direct calculations, we get \[\dot{\pa}_{r} \log(\eta(x,y))= -2m\, \frac{b_{r}(x)}{b_{i}(x)\, y^{i}} +2m \frac{\alpha_{ir}(x)\, y^{i}}{\alpha_{ij}(x)\, y^{i}\, y^{j}} \] which are rational functions in $y$ as expected from Proposition \ref{pa dot log eta rat}. Similarly, the functions 
\[\pa_{r} \log(\eta(x,y))=  -2m\, \frac{y^{i}\pa_{r} b_{i}(x)}{b_{i}(x)\, y^{i}} +m \frac{ y^{i}\,y^{j} \, \pa_{r} \alpha_{ij}(x)}{\alpha_{ij}(x)\, y^{i}\, y^{j}} \] are  rational functions in $y$.  \\ 
\begin{rem} In particular for $k=1$, $F$ is the well known Kropina  metric, which  has been studied in details in \cite{Matsumoto, Sh}.
\end{rem}

\textbf{The special polynomial  $(\alpha , \beta)$-metric.}\\

Let $F:=\alpha\,\phi(s)$ where $\phi(s)= a \,s^k + b\,s^m, \,\, m=k (\text{mod}) 2\,\,$  such that
\[ a \,(m-1)\, s^m + b \,(k-1)\,  s^k > 0,\,\,\ |s|< b_{o}.\]

By straight forward calculations, we get
\begin{eqnarray*}
g_{ij} &=& -(a \,s^m + b \,s^k) \{a \,(m-1)\, s^m + b \,(k-1)\,  s^k \}\, \alpha_{ij}   \\
&& + \frac{a^2 \, m\,(2 m-1)\, s^{2m} + b^2 \,k\,(2 k-1) \, s^{2k} + a\, b\, (m+k-1) \,(m+n)\, s^{m+k}}{s^2}\,\,b_{i}\, b_{j} \\
&& - \frac{2\,a^2 \, m\,( m-1)\, s^{2m}  + b^2 \,k\,(k-1)\,s^{2k}  + a\, b (m+k-2) \,(m+k) \,s^{m+k}} {s \,\alpha} \times
\\ && \left(b_{i} y_{j} + y_{i} b_{j} -  \frac{s}{\alpha} y_{i} y_{j}\right).
\end{eqnarray*}
Thereby, $g_{ij}$ can be written in the  form:
\begin{eqnarray*}
g_{ij} &=& -\left( \frac{O}{2m} \,s^{2m} + \frac{Z}{k}\,s^{2k} + \frac{U}{(m+k)} \, s^{m+k} \right)\, \alpha_{ij}   \\
&& + \,\,\frac{A\, s^{2m} +B\, s^{2k} + H\, s^{m+k}}{s^2 }\,\,b_{i}\, b_{j} \\
&& -\,\, \frac{O\, s^{2m}  + Z\,s^{2k}  + U \,s^{m+k}} {s \,\alpha} \,\left(b_{i} y_{j} + y_{i} b_{j} -  \frac{s}{\alpha} y_{i} y_{j}\right),
\end{eqnarray*}
where $A:=a^2 \, m\,(2 m-1),\, B:=  b^2 \,k\,(2 k-1), \, H:= a\, b\, (m+k-1) \,(m+k) ,\,$\\ $O:=2\,a^2 \, m\,(m-~1),\,\, Z:= b^2 \,k\,(k-1),\,\,U:=a\, b (m+k-2) \,(m+k) $ are real constants.\\

Therefore, $g_{ij}$ are rational functions in $y$. Indeed, the coefficients of $\alpha_{ij}(x)$ are linear combination of the terms $s^{2m},\, s^{2k}$ and $ s^{n+k} $ which are rational  functions in $y$. Similarly, the coefficients of $b_{i}(x)\, b_{j}(x)$ are the linear combination of $s^{2m-2},\, s^{2k-2}$ and $ s^{k+m-2} $,  which are also rational  functions in $y$. The last term in $g_{ij}$ is the product of  rational functions in $y$, namely $\frac{1}{s\, \alpha} = \frac{1}{\beta}$, with the linear combination of the functions $s^{2m},\, s^{2k},\, s^{k+m} $.  It may be noted here that the term $ \frac{s}{\alpha}= \frac{\beta}{\alpha ^2}$ itself is rational in $y$. Since $g_{ij}$ is a rational function in $y$, we get \[\dot{\pa}_{i} \log(\eta(x,y))= 0= \pa_{i} \log(\eta(x,y)). \] 

\textbf{The $m$-th root metric.} \\

The Finsler mapping of the $m$-th root type is defined by, cf. \cite{$m$-th root}, 
\be \label{mthrootmetric def}
F(x,y):= \left( a_{i_{1} i_{2}...i_{m}}(x)\, y^{i_{1}}\, y^{i_{2}}\,... \,y^{i_{m}} \right)^{\frac{1}{m}}.
\ee

 In fact, $F(x,y)$ is a rational function in $y$ if and only if it is quadratic in $y$, that is $F$ is Riemannian. In addition, $F$ is positive definite when $m$ is even. It is clear that for $m\geq 3$ and $ n\geq 2$, the $m$-th root metric is an irrational function in $y$.

Let the polynomial \be \label{A mthrootmetric def}
 A :=  a_{i_{1} i_{2}...i_{m}}(x)\, y^{i_{1}}\, y^{i_{2}}\,... \,y^{i_{m}} = F^m .\ee
 
 Thereby, its fundamental metric tensor has the following components, 
 \[g_{ij}(x,y)= \frac{1}{m}A^{\frac{2}{m}-2}\, \left[ A \,\dot{\pa}_{i}\dot{\pa}_{j}A + \frac{2-m}{m} \,\dot{\pa}_{i}A\,\dot{\pa}_{j}A \right] .\] Thus, $F$ is an AR-Finsler metric. Indeed,  $g_{ij}(x,y)$ satisfies \eqref{gdef}  where $$\eta(x,y) = \frac{1}{m}\,A^{\frac{2}{m}-2}= \frac{1}{m}\,F^{2-2m}\,\, \text{ and }\,\,\, a_{ij}=A\,\dot{\pa}_{i}\dot{\pa}_{j}A - \frac{2-m}{m} \,\dot{\pa}_{i}A\,\dot{\pa}_{j}A .$$
Obviously, $\eta(x,y)$ is an irrational positive homogeneous function  of degree $(2-m)$ in $y$ and $a_{ij}(x,y)$ are rational  positive homogeneous functions of degree $(m-2)$ in $y$. 
By direct calculations, we get \[\dot{\pa}_{i} \log(\eta(x,y))=\frac{2(1-m)}{m^2}\, \frac{\dot{\pa}_{i}A(x,y)}{A(x,y)}= \frac{2(1-m)}{m}\, \frac{a_{i i_{2}...i_{m}}(x)\, y^{i_{2}}\,... \,y^{i_{m}}}{a_{i_{1} i_{2}...i_{m}}(x)\, y^{i_{1}}\, y^{i_{2}}... \,y^{i_{m}}} \] which are rational functions in $y$. Similarly,  
\[\pa_{i} \log(\eta(x,y))= \frac{2(1-m)}{m^2}\, \frac{\pa_{i}{a_{i_{1} i_{2}...i_{m}}}(x)\, y^{i_{1}}\, y^{i_{2}}\,... \,y^{i_{m}}}{a_{i_{1} i_{2}...i_{m}}(x)\, y^{i_{1}}\, y^{i_{2}}... \,y^{i_{m}}} \] are  rational functions in $y$. 
\begin{rem}
A special case of \eqref{mthrootmetric def} is the Finsler metric $F(x,y)= f(x)\,(y^{1} y^2 y^3)^{\frac{1}{3}}$ which defined in the conic domain  $\mathcal{D}= T\mathbb{R}^{3} - \{(x^{i},y^{i}) \in T\mathbb{R}^3\, |\,y^{i}\neq 0 \}$. It is an AR-Finsler manifold which does not admit a semi-concurrent vector field \cite[Remark 3.13]{semi}.
\end{rem}

\textbf{The generalized Kropina change $\widetilde{F}= \frac{F^{k+1}}{\beta ^k}$ of an $m$-th root metric $F$}. \\
Here,  $F$ is defined in \eqref{mthrootmetric def} and $k$ is an arbitrary positive real number cf. \cite{gen. Krop. change}. \\

One can check that, the fundamental metric tensor $\widetilde{g}_{ij}(x,y)$ satisfies \eqref{gdef}  with $$\widetilde{\eta}(x,y)= \frac{A^{\frac{2k+2-m}{m}}(x,y)}{\beta^{2k}(x,y)} =\frac{F^{2(k+1)-m}(x,y)}{\beta^{2k}(x,y)} \qquad\text{   and }$$ 
{\Small{\[\widetilde{a}_{ij}= \frac{k(2k+1)}{\beta^2}\,A\, b_{i}\, b_{j} + \frac{(k+1)}{m}\,\dot{\pa}_{i}\dot{\pa}_{j} A+ \frac{(k+1)(2k-m+2)}{m^2\,A}\, \dot{\pa}_{i}A\,\dot{\pa}_{j}A  - \frac{2k(k+1)}{m\, \beta}\,\left(b_{j}\,\dot{\pa}_{i}A+b_{i}\,\dot{\pa}_{j}A  \right) .\]}}
Therefore, $\widetilde{F}$ is an AR-Finsler metric.\\

Thereby, $\widetilde{\eta}(x,y)$ is an irrational positive homogeneous function of degree $(2-m)$ in $y$ and $\widetilde{a}_{ij}(x,y)$ are positive homogeneous of degree $(m-2)$ rational functions in $y$. 
By direct calculations, one can see that $\dot{\pa}_{i} \log(\widetilde{\eta}(x,y))$ and $\pa_{i} \log(\widetilde{\eta}(x,y))$ are rational functions in $y$. 

\begin{rem} In particular for $k=1$, $\widetilde{F}$ becomes the Kropina change of an $m$-th root metric which  has been studied in \cite{Krop. change}.
	\end{rem}

\textbf{ The extended $m$-th root metric.}\\

Now, let us introduce the following Finsler metric that can be considered as an extension of the $m$-th root metric. It is given by 
\be \label{m-root metric with  rational coeff}   
   F(x,y):= \left( \mu_{i_{1} i_{2}...i_{m}}(x,y)\, y^{i_{1}}\, y^{i_{2}}\,... \,y^{i_{m}} \right)^{\frac{1}{m}} ,\ee
where $ \mu_{i_{1} i_{2}...i_{m}}(x,y)$ symmetric in all indices, positive homogeneous of degree $(0)$ in $y$.
 In fact, $F(x,y)$ is a rational function in $y$ if and only if it is quadratic in $y$ and $ \mu_{i_{1} i_{2}...i_{m}}(x,y)$ are rational functions in $y$.  It is clear that for $m\geq 3$ and $ n\geq 2$, an extended $m$-th root metric \eqref{m-root metric with  rational coeff} is irrational function in $y$. Let \[ \mathcal{A}(x,y) =  \mu_{i_{1} i_{2}...i_{m}}(x,y)\, y^{i_{1}}\, y^{i_{2}}\,... \,y^{i_{m}} = F^m .\] Thus, the supporting element is given by
 \[ l_{i}= \dot{\pa}_{i} F = \dot{\pa}_{i}\mathcal{A}^{1/m}= \frac{1}{m}\, \mathcal{A}^{\frac{1-m}{m}} \,\dot{\pa}_{i}\mathcal{A} . \]
 The associated metric tensor has the following components 
\[ g_{ij}(x,y)=\mathcal{A}^{\frac{2-2m}{m}} \, \left[ \frac{1}{m}\, \mathcal{A}\,\dot{\pa}_{i}\dot{\pa}_{j} {\mathcal{A}}+ \frac{2-m}{m^{2}}\,\dot{\pa}_{i} {\mathcal{A}}\,\dot{\pa}_{j} {\mathcal{A}} \right] .\]
 
It should be noted that, when $\mu_{i_{1} i_{2}...i_{m}}(x,y)$  are  rational functions in $y$, then $g_{ij}(x,y)$ satisfies \eqref{gdef}  where 
\be \label{extended $m$-th root fund. metric}
\eta(x,y) =\mathcal{A}^{\frac{2-2m}{m}}= F^{2-2m}\,\, \text{ and }\,\, a_{ij}=\frac{1}{m}\,\mathcal{A}\, \dot{\pa}_{i}\dot{\pa}_{j} {\mathcal{A}}- \frac{2-m}{m^{2}}\,\dot{\pa}_{i} {\mathcal{A}}\,\dot{\pa}_{j} {\mathcal{A}}. \ee
 Indeed, in case of $ \mu_{i_{1} i_{2}...i_{m}}(x,y)$ are rational functions in $y$, the function $\mathcal{A}(x,y)$ is  rational in $y$. One can see that, $\eta(x,y)$ is an irrational  positive homogeneous of degree $(2-m)$ in $y$ and $a_{ij}(x,y)$ are rational positive homogeneous functions of degree $(m-2)$ in $y$. Consequently, $F$ is an AR-Finsler metric. \\
  
By direct calculations, we get 
\begin{eqnarray*}
\dot{\pa}_{i} \log(\eta(x,y))&=&(2-2m)\, \frac{\dot{\pa}_{i} {F(x,y)}}{F(x,y)} = \frac{(2-2m)}{m}\, \frac{\dot{\pa}_{i} {\mathcal{A}(x,y)}}{\mathcal{A}(x,y)} \\ &=& \frac{(2-2m)}{m}\, \frac{\left(m\,\mu_{i i_{2}...i_{m}}(x,y)\, y^{i_{2}}\,...\, y^{i_{m}} + y^{i_{1}}\, y^{i_{2}}\,...\, y^{i_{m}}\,\dot{\pa}_{i}\mu_{i_{1} i_{2}...i_{m}}(x,y)\right)}{\mu_{i_{1} i_{2}...i_{m}}(x,y)\, y^{i_{1}}\, y^{i_{2}}\,... \,y^{i_{m}}} 
\end{eqnarray*}
 which are rational functions in $y$. Similarly, ${\pa}_{i} \log(\eta(x,y))$ are rational functions in $y$. \\



\textbf{ The generalized Kropina change of the extended $m$-th root metric with  rational coefficients.}\\

Here, $F$ is given by \eqref{m-root metric with  rational coeff}, where $ \mu_{i_{1} i_{2}...i_{m}}(x,y)$ are rational functions in $y$. One can check that, $\widehat{F}:=\frac{F^{k+1}}{\beta^k}$ is an AR-Finsler metric. Indeed, its fundamental tensor $\widehat{g}_{ij}(x,y)$ satisfies \eqref{gdef}  with $$\widehat{\eta}(x,y) = \frac{\mathcal{A}^{\frac{2k+2-m}{m}}(x,y)}{\beta^{2k}(x,y)} =\frac{F^{2(k+1)-m }(x,y)}{\beta^{2k}(x,y) } \qquad\text{   and }$$ 

\begin{eqnarray*}
 \widehat{a}_{ij}&= &\frac{k(2k+1)}{\beta^2}\,\mathcal{A}\, b_{i}\, b_{j} + \frac{(k+1)}{m}\,\dot{\pa}_{i}\dot{\pa}_{j} \mathcal{A}+ \frac{(k+1)(2k-m+2)}{m^2\,\mathcal{A}}\, \dot{\pa}_{i}\mathcal{A}\,\dot{\pa}_{j}\mathcal{A} \\
 && - \frac{2k(k+1)}{m\, \beta}\,\left(b_{j}\,\dot{\pa}_{i}\mathcal{A}+b_{i}\,\dot{\pa}_{j}\mathcal{A} \right) .
 \end{eqnarray*}

Thereby, $\widehat{\eta}(x,y)$ is an irrational positive homogeneous function of degree $(2-m)$ in $y$ and $\widehat{a}_{ij}(x,y)$ are  rational functions in $y$ and positive homogeneous of degree $(m-2)$ in $y$. 
Moreover, one can show that $\dot{\pa}_{i} \log(\eta(x,y))$ and $\pa_{i} \log(\eta(x,y))$ are rational functions in $y$. 
\begin{rem} In particular for $k=1$, $\widehat{F}$ becomes the Kropina change of an extended $m$-th root metric with rational coefficients.
\end{rem}
\begin{rem}
In the above mentioned examples, we observe the following:\\

$(i)$ 
The generalized Kropina change of an $m$-th root metric or extended $m$-th root metric with rational coefficients  preserves the almost rationality of the Finsler metric.\\ 

$(ii)$ The  geometric objects associated to the AR-Finsler metrics, namely,  $I_{k},\,\,G^{i},\,\, N^{i}_{j},\,\, G^{i}_{jk}$ 

$G^{i}_{jkl},\,\, D^{i}_{jkl},\,\, L^{i}_{jkl},\,\,J_{k},\,\, R^{i}_{j},\,\, \chi_{i},\,\, W^{i}_{j} $ and the $S$-curvature are rational functions in $y$. It follows from  the results of \S 3 and  these AR-metrics satisfy the property that  ${\pa}_{i} \log(\eta(x,y))$ are rational functions in $y$.
\end{rem} 



\bibliographystyle{plain}

\begin{thebibliography}{lbl}
\bibitem{oana}O.~Constantinescu and E. H.~Taha, \textit{Alternative Lagrangians obtained by scalar deformations}, Int. J. Geom. Meth. Mod. Phys. 17 (4) (2020) 2050050.

\bibitem{Deicke} 
A. Deicke, \emph{$\ddot{U}$ber die Finsler-r$\ddot{a}$ume mit $A_i=0$,} Arch. Math. \textbf{4} (1953) 45--51.

\bibitem{BLZS}	B. Li and Z. Shen, \textit{On Randers Metrics of Quadratic Riemann Curvature}, Int. J.  Math. \textbf{20} (03) (2009) 369--376.

\bibitem{Matsumoto} M. Matsumoto, \textit{On C-reducible Finsler spaces}, Tensor N. S. 24 (1972) 29--37.

\bibitem{Randers} G. Randers, \textit{On an asymmetric metric in the four-space of general relativity}, Phys.
Rev. \textbf{59} (1941), 195--199.

\bibitem{Sh} Y. Shen and Z. Shen, \emph{Introduction to modern Finsler geometry}, Higher Education Press, Beijing; World Scientific Publishing Co., Singapore, 2016, pp. xii+393.



\bibitem{gen. Krop. change} B. Tiwari and G.~K. Prajapati,  \textit{On generalized Kropina change of $m$-th root Finsler metric}, Int. J. Geom. Methods Mod. Phys. \textbf{14} (2017) 1750081.

\bibitem{Krop. change} B. Tiwari, R. Gangopadhyay and G.K. Prajapati,  \textit{On  Kropina change of $m$-th root Finsler metric with special curvature properties}, An. \c{S}tiin\c{t}. Univ. Al. I. Cuza Ia\c{s}i. Mat. (N.S.) \textbf{64} (2018) 100--110.



\bibitem{spray} B. Tiwari, GK. Prajapati and  R. Gangopadhyay,  \textit{On  Finsler spaces with  rational spray coefficients}, Diff. Geom. Dynam. Systems \textbf{21}(2019), 193--201.



\bibitem{semi} N. L.~Youssef, S.~G.~Elgendi and E. H.~Taha, \textit{Semi-concurrent vector field in Finsler geometry}, Differ. Geom. Appl. 65 (2019) 1--15. 

\bibitem{$m$-th root} Y. Yu and Y. You, \textit{On Einstein $m$-th root metrics}, Diff. Geom. Appl. \textbf{28} (2010) 290--294.





\end{thebibliography}

\end{document}